\documentclass{siamart0516}
\usepackage{amsmath}
\usepackage{amsfonts}
\usepackage{amssymb}
\usepackage{setspace}
\newsiamremark{rem}{Remark}
\title{Existence of the optimum for Shallow Lake type \textsuperscript{ }models
\footnotetext{\textbf{F}\MakeLowercase{\textbf{unding: }This research did not receive any specific grant from funding agencies in the public, commercial, or not-for-profit sectors.}}}
\author{Francesco Bartaloni}
\headers{Existence of the optimum for Shallow Lake type models}{Francesco Bartaloni}

\begin{document}

\maketitle
\vspace{-2mm}
\begin{center}
\small{Dipartimento di Matematica, Universit\`a di Pisa,\\
Largo B. Pontecorvo 5, 56127 Pisa, Italy.\\
E-mail address: \email{bartaloni@dm.unipi.it}}
\end{center}

\bigskip
\begin{abstract}
We consider the optimal control problem associated with a general
version of the well known shallow lake model, and we prove the existence
of an optimum in the class $L_{loc}^{1}\left(0,+\infty\right)$. Any
direct proof seems to be missing in the literature. Dealing with admissible
controls that can be unbounded (even locally) is necessary in order
to represent properly the concrete optimization problem; on the other
hand, the non-compactness of the control space together with the infinite
horizon setting prevents from having good \emph{a priori} estimates
- and this makes the existence problem considerably harder. We present
an original method which is in a way opposite to the classical control
theoretic approach used to solve finite horizon Mayer or Bolza problems.
Synthetically, our method is based on the following scheme: i) two
uniform localization lemmas providing, given $T\geq1$ and a maximizing
sequence of controls, another sequence of controls which is bounded
in $L^{\infty}\left(\left[0,T\right]\right)$ and still maximizing.
ii) A special diagonal procedure dealing with sequences which are
not extracted one from the other. iii) A \textquotedblleft{}standard\textquotedblright{}
diagonal procedure. The optimum results to be locally bounded by construction.\\
\end{abstract}

\begin{keywords}
Control, global optimization, non compact control space, uniform localization, convex-concave dynamics.
\end{keywords}

\onehalfspacing

\section{Introduction}

In this work we examine the optimal control problem related to a general
version of the Shallow Lake model, and we prove the existence of an
optimum. In the last fifteen years, a literature about this model
has grown up, but, in our knowledge, no direct existence proof has
been provided up to now. The optimal control problem has been introduced
in \cite{Maler}, and has been studied mostly via dynamic programming
(\cite{Kossioris}), or from the dynamical systems viewpoint (see
e.g. \cite{Kiseleva-Wagener 1}, \cite{Kiseleva-Wagener 2} and \cite{Maler}).
The latter approach consists in the analysis of the adjoint system
that is obtained coupling the state equation with the adjoint equation
given by the Pontryagin Maximum Principle. As it is well known, such
principle provides conditions for optimality that in general are merely
necessary.

The main technical difficulties in order to prove the existence of
an optimum arise from the fact that good \emph{a priori} estimates
for the controls and for the states are missing, because of the infinite
horizon setting and the unboundedness assumption on the set of admissible
controls. Indeed, the intimate nature of the model requires that one
may be allowed to choose a (locally integrable) control function that
reaches arbitrarily large values in a finite time. Also controls that
are arbitrarily near $0$ are allowed, and this produces similar effects
when the functional has logarithmic dependence on the control. In
this context the application of any compactness result is not straightforward.\\

Here we propose an original approach to the existence problem. In
a sense, we proceed the opposite direction respect to what is done
in the proof of some classical existence results for finite horizon
problems such as Filippov-Cesari theorem. In the latter kind of proof,
thanks to some \emph{a priori} estimates, Ascoli-Arzel\`a theorem is
applied in order to obtain an optimizing sequence of \emph{states}
converging to a candidate optimal state, which is proven to be almost
everywhere differentiable; then some convexity assumption on the dynamics
of the state equation allows to pointwise identify a control satisfying
the instance of the state equation involving the candidate optimal
state. Finally, such control is proven to be admissible by a measurable
selection argument. This is what is essentially needed in the case
of finite horizon Mayer problems; in the case of Bolza problems with
coercive dependence of the integral functional on the control, the
same scheme is, roughly speaking, applied to the couple $x_{n},\, J_{int}\left(x_{n},u_{n}\right)$,
where $\left(x_{n}\right)_{n}$ is an optimizing sequence of states
and $J_{int}\left(\mathrm{x},\mathrm{u}\right)\left(t\right)=\int_{T_{0}}^{t}L\left(s,\mathrm{x}\left(s\right),\mathrm{u}\left(s\right)\right)\mbox{d}s$
is the integral part of the objective functional; in this case, after
proving that the limit $x_{*}$ of $x_{n}$ has an admissible companion
control $u_{*}$, one also has to prove that $u_{*}$ is in the proper
relation with the limit of $J_{int}\left(x_{n},u_{n}\right)$. For
the details of the latter (complex) proof, see \cite{Fleming Rishel},
Chapter III, \S\ 5.

In other words, the classical control theoretic approach to the existence
problem starts with the convergence of the states and associated functionals
to some limit, and ends up with with a control function giving those
two limits the desired form; in particular no direct semi-continuity
argument for the functional is used.\\

In our approach, dealing with a functional of the type
\[
J\left(\mathrm{u}\right)=\int_{0}^{+\infty}e^{-\rho t}\left(\log\mathrm{u}\left(t\right)-c\mathrm{x}^{2}\left(t\right)\right)\mbox{d}t,
\]
we consider an optimizing sequence of locally integrable \emph{controls}
$\left(u_{n}\right)_{n}$ and, in order to bypass the absence of \emph{a
priori} estimates, we prove two\emph{ }uniform localization lemmas
(``from above'' and ``from below''). This way, for a fixed compact
interval $\left[0,T\right]$, we are able to find a sequence $\left(u_{n}^{T}\right)_{n}$
which is still optimizing and also uniformly bounded in $\left[0,T\right]$,
by two quantities $N\left(T\right)$, $\eta\left(T\right)$. By weak
(relative) compactness we can extract a sequence $\left(\bar{u}_{n}^{T}\right)_{n}$,
weakly converging in $L^{1}\left(\left[0,T\right]\right)$. We repeat
the process for bigger and bigger intervals, each time starting from
the maximizing sequence we ended up with in the previous step.

In order to merge properly the local (weak) limits, the standard diagonal
argument does not work, since we are in presence of two families of
sequences which \emph{a priori} are not extracted one from the other:
the ``barred'' converging sequences and the ``unbarred'' sequences
obtained by applying the uniform localization lemmas. For instance,
$\left(u_{n}^{T+1}\right)_{n}$ will denote the sequence obtained
by applying the lemmas to $\left(\bar{u}_{n}^{T}\right)_{n}$ and
to the interval $\left[0,T+1\right]$.

Despite this, we can exploit a monotonicity property of the bound
functions $N$ and $\eta$ provided by the uniform localization lemmas,
in order to end up with a locally bounded optimizing sequence $\left(v_{n}\right)_{n}$
and a ``pre-optimal'' function $v$ such that $v_{n}\rightharpoonup v$
in $L^{1}\left(\left[0,T\right]\right)$ for every $T>0$.

Then we prove the pointwise convergence of the states associated with
$\left(v_{n}\right)_{n}$. Furthermore, another - standard - diagonal
procedure is needed in order to extract from $\left(v_{n}\right)_{n}$
a sequence $\left(v_{n,n}\right)_{n}$ such that $\log v_{n,n}\rightharpoonup\log u_{*}$,
in every in $L^{1}\left(\left[0,T\right]\right)$, for a proper function
$u_{*}$. This is eventually proven to be an admissible and optimal
control, relying basically on dominated convergence combined with the following relations:
\begin{align*}
x\left(\cdot;v_{n}\right)\to x\left(\cdot;v\right) & \quad\mbox{pointwise in }\left[0,+\infty\right)\\
\log v_{n,n}\rightharpoonup\log u_{*} & \quad\mbox{in }L^{1}\left(\left[0,T\right]\right),\,\forall T>0\\
u_{*}\leq v & \quad\mbox{a.e. in }\left[0,+\infty\right),
\end{align*}
where $\mathrm{x}\left(\cdot;\mathrm{u}\right)$ denotes the trajectory
associated with the control $\mathrm{u}$.

These and other considerations serve as a semi-continuity
argument and allow to conclude the proof.\\

The scheme
\[
\mbox{uniform localization lemmas }\leftrightarrows\mbox{ "local" compactness }
\]
\[
\dashrightarrow\mbox{ two families diagonalization }\dashrightarrow\mbox{ one family diagonalization}
\]

can be considered a development and an improvement of the method introduced
in \cite{AB} and may be hopefully generalized to a scheme for obtaining
existence proofs, applicable to a wider class of infinite horizon
optimal control problems with non compact control space.\\

The model describes the dynamics of the accumulation of phosphorous
in the ecosystem of a shallow lake, from a optimal control theory
perspective. Precisely, the state equation expresses the (non-linear)
relationship between the farming activities near the lake, which are
responsible for the release of phosphorus, and the total amount of
phosphorous in the water, depending also on the natural production
and on the natural loss consisting of sedimentation, outflow and sequestration
in other biomass. The objective functional that is to be maximized,
represents the social benefit depending on the pollution released
by the farming activities, and takes into account the trade-offs between
the utility of the agricultural activities and the utility of a clear
lake.

Following \cite{Maler}, we can assert that the essential dynamics
of the eutrophication process can be modelled by the differential
equation:
\begin{equation}
\dot{P}\left(t\right)=-sP\left(t\right)+r\frac{P^{2}\left(t\right)}{m^{2}+P^{2}\left(t\right)}+L\left(t\right),\label{eq: prima}
\end{equation}
where $P$ is the amount of phosphorus in algae, $L$ is the input
of phosphorus (the \textquotedblleft{}loading\textquotedblright{}),
$s$ is the rate of loss consisting of sedimentation, outflow and
sequestration in other biomass, $r$ is the maximum rate of internal
loading and $m$ is the anoxic level.

After a change of variable and of time scale, we consider the normalized
equation
\[
\dot{x}\left(\tau\right)=-bx\left(\tau\right)+\frac{x^{2}\left(\tau\right)}{1+x^{2}\left(\tau\right)}+u\left(\tau\right),
\]
where $x\left(\cdot\right):=P\left(\cdot\right)/m$, $u\left(\cdot\right)=L\left(\cdot\right)/r$
and $b=sm/r$. Hence we see that the dynamics, as a function of the
state, shows a convex-concave behaviour.

In an economical analysis, the dynamics of pollution must be considered together with the social benefit of the different interest groups operating in the lake system. The social benefit obviously depends both on the status of the water and on the intensity of agricultural activities near the lake, which in a way can be measured by the amount of phosphorous released in the water.

Farmers have an interest in being able to increase the loading, so that the agricultural sector can grow without the need to invest in new technology in order to reduce emissions. On the other hand, groups such as fishermen, drinking water companies and any other industry making use of the water prefer a clear lake, and the same holds for people who use to spend leisure time in relation with the lake.
It is assumed that a community or country, balancing these different interests, can agree on a welfare
function of the form 
\[
\log\mathrm{u}-c\mathrm{x}^{2}\quad(c>0),
\]
in the sense that the lake has value as a ``waste sink'' for agriculture
$\log\mathrm{u}$, where $\mathrm{u}$ is the input of phosphorous
due to farming, and it provides ecological services that decrease
with the total amount of phosphorus $\mathrm{x}$ as $-c\mathrm{x}^{2}$.\\
Here we focus on the case of monotone dynamics, as a first, fundamental step foreshadowing further 
developments.

\section{Boundedness of the value function}
\begin{definition}
For every $x_{0}\geq0$ and every $u\in L_{loc}^{1}\left(\left[0,+\infty\right)\right)$
the function $t\to x\left(t;x_{0},u\right)$ is the solution to the
following Cauchy's Problem:

\begin{equation}
\begin{cases}
{\displaystyle \dot{x}\left(t\right)=F\left(x\left(t\right)\right)+u\left(t\right)} & \quad t\geq0\\
x\left(t\right)=x_{0}
\end{cases}\label{eq: eq stato}
\end{equation}

in the unknown $x\left(\cdot\right)$, where $F$ has the following
properties:
\begin{align*}
 & F\in\mathcal{C}^{1}\left(\mathbb{R},\mathbb{R}\right),\, F'\leq0\mbox{ in }\mathbb{R},\, F\left(0\right)=0,\,\lim_{x\to+\infty}F\left(x\right)=-\infty,\ {\displaystyle \lim_{x\to+\infty}F'\left(x\right):=-l<0},\\
 & \mbox{there exist }\bar{x}>0\mbox{ such that }F\mbox{ is convex in }\left[0,\bar{x}\right]\mbox{ and concave in }\left[\bar{x},+\infty\right)
\end{align*}
Moreover, we set $F'\left(0\right)<0$.\\
\\
For every $x_{0}\geq0$, the set of the admissible controls is:
\begin{eqnarray*}
\Lambda\left(x_{0}\right) & := & \left\{ u\in L_{loc}^{1}\left(\left[0,+\infty\right)\right)/u>0\quad\mbox{a.e. in }\left[0,+\infty\right)\right\} 
\end{eqnarray*}
and the \emph{objective functional} is defined by
\[
\mathcal{B}\left(x_{0};u\right)=\int_{0}^{+\infty}e^{-\rho t}\left[\log u\left(t\right)-cx^{2}\left(t;x_{0},u\right)\right]\mbox{d}t\quad\forall u\in\Lambda\left(x_{0}\right),
\]
where $\rho$ and $c$ are positive constants.

The \emph{value function} is
\[
V\left(x_{0}\right):=\sup_{u\in\Lambda\left(x_{0}\right)}\mathcal{B}\left(x_{0};u\right).
\]

\end{definition}
$ $
\begin{rem}
The Cauchy's problem $\eqref{eq: eq stato}$ has a unique global solution,
since the dynamics $F\left(\cdot\right)$ has (globally) bounded derivative.
We have
\[
-b_{0}x\leq F\left(x\right)\leq-bx+M,
\]
for some constants $b_{0},b,M>0$. This is easily proven setting $-b:=-l+\epsilon$
for $\epsilon>0$ sufficiently small, choose $b_{0}=F'\left(0\right)\wedge-b$
and use the assumption $F'\to-l$ at $+\infty$and the continuity
of $F$.
\end{rem}
$ $
\begin{rem}
\label{remark funzione h}Let $s_{1},s_{2}\geq0$, $u_{1},u_{2}\in L_{loc}^{1}\left(\left[0,+\infty\right),\mathbb{R}\right)$
and $t_{0}\geq0$.

Set $x_{1}=x\left(\cdot;s_{1},u_{1}\right)$, $x_{2}=x\left(\cdot;s_{2},u_{2}\right)$
and define:

\smallskip{}

\[
h\left(x_{1},x_{2}\right)\left(\tau\right):=\begin{cases}
{\displaystyle \frac{F\left(x_{1}\left(\tau\right)\right)-F\left(x_{2}\left(\tau\right)\right)}{x_{1}\left(\tau\right)-x_{2}\left(\tau\right)}} & \mbox{if }x_{1}\left(\tau\right)\neq x_{2}\left(\tau\right)\\
\\
F'\left(x_{1}\left(\tau\right)\right) & \mbox{if }x_{1}\left(\tau\right)=x_{2}\left(\tau\right).
\end{cases}
\]

\smallskip{}

Then $h\left(x_{1},x_{2}\right)$ is continuous, $-b_{0}\leq h\leq0$
and the following relation holds:
\begin{eqnarray}
\forall t\geq t_{0}:x_{1}\left(t\right)-x_{2}\left(t\right) & = & \exp\left(\int_{t_{0}}^{t}h\left(x_{1},x_{2}\right)\left(\tau\right)\mbox{d}\tau\right)\left(x_{1}\left(t_{0}\right)-x_{2}\left(t_{0}\right)\right)\nonumber \\
 &  & +\int_{t_{0}}^{t}\exp\left(\int_{s}^{t}h\left(x_{1},x_{2}\right)\left(\tau\right)\mbox{d}\tau\right)\left(u_{1}\left(s\right)-u_{2}\left(s\right)\right)\mbox{d}s.\nonumber \\
\label{eq: comparison}
\end{eqnarray}
In particular, taking $t_{0}=0$ and $s_{1}=s_{2}$:
\begin{equation}
\forall t\geq0:\, x_{1}\left(t\right)-x_{2}\left(t\right)=\int_{0}^{t}\exp\left(\int_{s}^{t}h\left(x_{1},x_{2}\right)\left(\tau\right)\mbox{d}\tau\right)\left(u_{1}\left(s\right)-u_{2}\left(s\right)\right)\mbox{d}s\label{eq: comparison da 0}
\end{equation}

Indeed, for every $t\geq t_{0}$:
\begin{eqnarray*}
\dot{x}_{1}\left(t\right)-\dot{x}_{2}\left(t\right) & = & F\left(x_{1}\left(t\right)\right)-F\left(x_{2}\left(t\right)\right)+u_{1}\left(t\right)-u_{2}\left(t\right)\\
 & = & h\left(x_{1},x_{2}\right)\left(t\right)\left[x_{1}\left(t\right)-x_{2}\left(t\right)\right]+u_{1}\left(t\right)-u_{2}\left(t\right).
\end{eqnarray*}
Multiplying both sides of this equation by $\exp\left(-\int_{t_{0}}^{t}h\left(x_{1},x_{2}\right)\left(\tau\right)\mbox{d}\tau\right)$
we obtain:
\begin{align*}
 & \frac{\mbox{d}}{\mbox{d}t}\left[\left(x_{1}\left(t\right)-x_{2}\left(t\right)\right)\exp\left(-\int_{t_{0}}^{t}h\left(x_{1},x_{2}\right)\left(\tau\right)\mbox{d}\tau\right)\right]\\
= & \exp\left(-\int_{t_{0}}^{t}h\left(x_{1},x_{2}\right)\left(\tau\right)\mbox{d}\tau\right)\left(u_{1}\left(t\right)-u_{2}\left(t\right)\right)\quad\forall t\geq t_{0}
\end{align*}
Fix $t\geq t_{0}$ and integrate between $t_{0}$ and $t$; then $\eqref{eq: comparison}$
is easily obtained.
\end{rem}
$ $
\begin{rem}
\label{remark comp ODE}Relation $\eqref{eq: comparison}$ implies
a well known comparison result, which in our case can be stated as
follows.

\emph{Let $s_{1},s_{2}\geq0$ and $u_{1},u_{2}\in L_{loc}^{1}\left(\left[0,+\infty\right),\mathbb{R}\right)$;
then for every $t_{0}\geq0$ and every $t_{1}\in\left(t_{0},+\infty\right]$,
if $u_{1}\geq u_{2}$ almost everywhere in $\left[t_{0},t_{1}\right]$
and $x\left(t_{0};s_{1},u_{1}\right)\geq x\left(t_{0};s_{2},u_{2}\right)$,
then}
\[
x\left(t;s_{1},u_{1}\right)\geq x\left(t;s_{2},u_{2}\right)\quad\forall t\in\left[t_{0},t_{1}\right].
\]

Moreover another classical comparison result implies that

\emph{for every $x_{0}\geq0$ and every $u\in L_{loc}^{1}\left(\left[0,+\infty\right)\right)$}:
\begin{eqnarray}
e^{-b_{0}t}\left(x_{0}+\int_{0}^{t}e^{b_{0}s}u\left(s\right)\mbox{d}s\right) & \leq & x\left(t;x_{0},u\right)\nonumber \\
 & \leq & e^{-bt}\left(x_{0}+\int_{0}^{t}e^{bs}\left(M+u\left(s\right)\right)\mbox{d}s\right).\label{eq: comparison per singola}
\end{eqnarray}

\end{rem}
$ $
\begin{rem}
\label{B div -infty}The objective functional is not constantly equal
to $-\infty$. As a trivial example, consider the control $u\equiv1\in\Lambda\left(x_{0}\right)$.
Then by $\eqref{eq: comparison per singola}$:
\[
0\leq x\left(t;x_{0},u\right)\leq e^{-bt}x_{0}+\left(M+1\right)\frac{1-e^{-bt}}{b}
\]
which implies
\[
x^{2}\left(t\right)\leq\left(x_{0}^{2}+\frac{\left(M+1\right)^{2}}{b^{2}}\right)e^{-2bt}+2\left(M+1\right)\frac{x_{0}}{b}e^{-bt}+\frac{\left(M+1\right)^{2}}{b^{2}}.
\]
Hence
\[
\mathcal{B}\left(u\right)=-c\int_{0}^{+\infty}e^{-\rho t}x^{2}\left(t;x_{0},u\right)\mbox{d}t>-\infty.
\]

\end{rem}
$ $
\begin{rem}
\label{oss: approx semplici}Let $u\in\Lambda\left(x_{0}\right)$
and let $\left(u_{n}\right)_{n}\subseteq L^{1}\left(\left[0,+\infty\right)\right)$
be a sequence of simple functions such that $u_{n}\uparrow u$ pointwise
in $\left[0,+\infty\right)$. Then
\[
\mathcal{B}\left(u\right)\leq\liminf_{n\to+\infty}\mathcal{B}\left(u_{n}\right).
\]
Indeed, for every $n\in\mathbb{N}$, $u_{n}>0$
almost everywhere in $\left[0,+\infty\right)$, so
$\left(e^{-\rho t}\log u_{n}\left(t\right)\right)_{n}\subseteq L^{1}\left(\left[0,+\infty\right)\right)$
and $e^{-\rho t}\log u_{n}\left(t\right)\uparrow e^{-\rho t}\log u\left(t\right)$
for almost every $t\geq0$. By monotone convergence we obtain:

\begin{eqnarray*}
\limsup_{n\to+\infty}\left[\mathcal{B}\left(u\right)-\mathcal{B}\left(u_{n}\right)\right] & = & \limsup_{n\to+\infty}\int_{0}^{+\infty}e^{-\rho t}\left[\log u\left(t\right)-\log u_{n}\left(t\right)-c\left(x^{2}\left(t\right)-x_{n}^{2}\left(t\right)\right)\right]\mbox{d}t\\
 & \leq & \lim_{n\to+\infty}\int_{0}^{+\infty}e^{-\rho t}\left[\log u\left(t\right)-\log u_{n}\left(t\right)\right]\mbox{d}t\\
 & = & 0,
\end{eqnarray*}
where the inequality holds since $0\leq x_{n}\leq x$ for every $n\in\mathbb{N}$,
by Remark $\ref{remark comp ODE}$.
\end{rem}
$ $
\begin{definition}
A sequence $\left(u_{n}\right)_{n\in\mathbb{N}}\subseteq\Lambda\left(x_{0}\right)$
is said to be \emph{maximizing at} $x_{0}$ if
\[
\lim_{n\to+\infty}\mathcal{B}\left(x_{0};u_{n}\right)=V\left(x_{0}\right).
\]

\end{definition}
$ $
\begin{proposition}
\label{prop: succ max} \emph{i)} The value function $V$:$\left[0,+\infty\right)\to\mathbb{R}$
satisfies:
\[
V\left(x_{0}\right)\leq\frac{1}{\rho}\log\left(\frac{\rho+b_{0}}{\sqrt{2ec}}\right)\quad\forall x_{0}\geq0.
\]

\emph{ii)} For every $x_{0}\geq0$, there exist constants $K_{1}\left(x_{0}\right),K_{2}\left(x_{0}\right)>0$
such that, for every $u\in\Lambda\left(x_{0}\right)$ belonging to
a maximizing sequence:
\begin{align}
 & \int_{0}^{+\infty}e^{-\rho t}u\left(t\right)\mbox{d}t\leq K_{1}\left(x_{0}\right),\label{eq: stima u massimizz}\\
 & \int_{0}^{+\infty}e^{-\rho t}x\left(t;x_{0},u\right)\left(t\right)\mbox{d}t\leq K_{2}\left(x_{0}\right).\label{eq: stima x massimizz}
\end{align}

\end{proposition}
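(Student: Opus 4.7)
The plan is to produce an upper bound for $\mathcal{B}(x_{0};u)$ that depends only on the quantity
$C(u):=\int_{0}^{+\infty}e^{-\rho t}x^{2}(t;x_{0},u)\,\mbox{d}t$,
and then to optimize in $C$; both parts of the proposition will follow. I would build this bound from three estimates, each concerning one of the quantities
$\int e^{-\rho t}\log u$, $\int e^{-\rho t}u$ and $\int e^{-\rho t}x$.

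First, Jensen's inequality applied to the concave function $\log$ against the probability measure $\rho e^{-\rho t}\,\mbox{d}t$ on $[0,+\infty)$ gives
\[
\int_{0}^{+\infty}e^{-\rho t}\log u(t)\,\mbox{d}t\;\leq\;\frac{1}{\rho}\log\!\left(\rho\int_{0}^{+\infty}e^{-\rho t}u(t)\,\mbox{d}t\right).
\]
Second, since $F(x)\geq -b_{0}x$ on $[0,+\infty)$ (the half-line where trajectories are confined, as $u>0$ and $F(0)=0$), one has $u=\dot x-F(x)\leq \dot x+b_{0}x$ a.e.; integrating $e^{-\rho t}\dot x$ by parts on $[0,T]$ then yields
\[
\int_{0}^{T}e^{-\rho t}u\,\mbox{d}t\;\leq\; e^{-\rho T}x(T)-x_{0}+(\rho+b_{0})\int_{0}^{T}e^{-\rho t}x\,\mbox{d}t.
\]
Third, Cauchy--Schwarz against $\rho e^{-\rho t}\,\mbox{d}t$ produces
$\int_{0}^{+\infty}e^{-\rho t}x\,\mbox{d}t\leq \rho^{-1/2}\,C(u)^{1/2}$.
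Chaining the three estimates yields
\[
\mathcal{B}(x_{0};u)\;\leq\;\frac{1}{2\rho}\log\!\bigl(\rho(\rho+b_{0})^{2}\,C(u)\bigr)-c\,C(u).
\]

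For part (i), the right-hand side is a strictly concave function of $C>0$ whose unique maximizer is $C^{*}=1/(2\rho c)$; substituting and simplifying produces exactly $\rho^{-1}\log\bigl((\rho+b_{0})/\sqrt{2ec}\bigr)$. For part (ii), I would run the same chain in reverse. Any $u$ in a maximizing sequence eventually satisfies $\mathcal{B}(x_{0};u)\geq V(x_{0})-1=:\gamma$, and since $V(x_{0})<+\infty$ by part (i), this forces $C(u)<+\infty$. Calling the right-hand side of the chain $g(C)$, the sublevel set $\{g\geq\gamma\}$ is bounded in $(0,+\infty)$, so $C(u)\leq K(x_{0})$; the Cauchy--Schwarz step then supplies $K_{2}(x_{0})$ for $\int e^{-\rho t}x\,\mbox{d}t$ and the state-equation step supplies $K_{1}(x_{0})$ for $\int e^{-\rho t}u\,\mbox{d}t$.

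The main technical obstacle is disposing of the boundary term $e^{-\rho T}x(T)$ as $T\to+\infty$, since no pointwise control on $x$ is available a priori. I would handle this by a subsequence argument: finiteness of $C(u)$ forces $\liminf_{t\to+\infty}e^{-\rho t}x^{2}(t)=0$, so there exists $T_{n}\to+\infty$ with $e^{-\rho T_{n}}x^{2}(T_{n})\to 0$, whence
$e^{-\rho T_{n}}x(T_{n})\leq e^{-\rho T_{n}/2}x(T_{n})=\sqrt{e^{-\rho T_{n}}x^{2}(T_{n})}\to 0$.
Along this subsequence, monotone convergence converts the finite-horizon chain into the infinite-horizon estimates used above; the degenerate case $C(u)=+\infty$ forces $\mathcal{B}(x_{0};u)=-\infty$, which makes both assertions trivial there.
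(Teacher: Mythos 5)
Your proof is correct in its main thrust and produces exactly the bound in the statement, but the route through the key inequality is genuinely different from the paper's, and one corner of the argument is handled more loosely than it should be.

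The paper and you both end up with the single inequality $\int_{0}^{\infty}e^{-\rho t}u\,\mathrm{d}t\leq\frac{\rho+b_{0}}{\sqrt{\rho}}C(u)^{1/2}$ (equivalently $C(u)\geq\frac{\rho}{(\rho+b_{0})^{2}}\left(\int e^{-\rho t}u\right)^{2}$), both apply Jensen to $\log$, and the final one-variable optimization is the same up to the substitution $C=z^{2}/\bigl(\rho(\rho+b_{0})^{2}\bigr)$. But the derivations are distinct. The paper uses the explicit lower comparison solution $x(t)\geq e^{-b_{0}t}\int_{0}^{t}e^{b_{0}s}u\,\mathrm{d}s$ from $\eqref{eq: comparison per singola}$ and then a Tonelli/Fubini interchange, which is boundary-term-free and valid for any nonnegative $u$ without further finiteness conditions. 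You instead go through the pointwise differential inequality $u\leq\dot{x}+b_{0}x$, integrate by parts on $[0,T]$, and then have to discharge the term $e^{-\rho T}x(T)$. Your subsequence argument using $\liminf_{t}e^{-\rho t}x^{2}(t)=0$ does discharge it correctly, but it is an extra moving part that the paper's route simply avoids. Conversely, your route makes the structure ``only two quantities matter, $\int e^{-\rho t}\log u$ and $C(u)$, and $C(u)$ controls both'' particularly transparent, and has the pedagogical advantage of not invoking the explicit solution formula of the linear comparison equation. Your part (ii), reading the sublevel set of the concave one-variable bound $g(C)$, is a clean reorganization of the paper's argument, which instead fixes a threshold $\tilde{K}(x_{0})$ directly on $z=\rho\int e^{-\rho t}u$; either works.

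The one genuine gap is the dismissal of the degenerate case. You write that $C(u)=+\infty$ forces $\mathcal{B}(x_{0};u)=-\infty$, but this is not automatic: the integrand $e^{-\rho t}\bigl(\log u-cx^{2}\bigr)$ can a priori have both positive and negative parts with infinite integrals, in which case the Lebesgue integral defining $\mathcal{B}$ is not even well-posed. To see the issue, note that $\int e^{-\rho t}\log^{+}u$ can be $+\infty$ whenever $\int e^{-\rho t}u=+\infty$, and this latter condition (by your own chain or by the paper's $\eqref{eq: stima int x^2 basso}$, which goes through Tonelli and so does not need any finiteness hypothesis) is exactly what produces $C(u)=+\infty$. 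The paper handles this via a three-way case distinction on the finiteness of $\int u$ and $\int e^{-\rho t}u$, together with the monotone approximation by simple functions of Remark $\ref{oss: approx semplici}$, which shows $\mathcal{B}(u)\leq\liminf_{n}\mathcal{B}(u_{n})$ and pushes the estimate $\eqref{eq: stima B Paolo}$ through the limit. If you keep your route, you need either this approximation device, or an extra argument showing that in your degenerate regime the positive part of the integrand has finite weighted integral, so that $\mathcal{B}(u)$ is a well-defined element of $[-\infty,+\infty)$ and indeed equals $-\infty$. Once that is patched, your proof is complete, and for part (ii) the observation that $V(x_{0})>-\infty$ (Remark $\ref{B div -infty}$) rules out the degenerate case for any control in a maximizing sequence, so your bounded-sublevel-set reading gives $K_{1},K_{2}$ as claimed.
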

Hereinafter we will often use the following weaker estimate relative
to a control $u\in\Lambda\left(x_{0}\right)$ belonging to a maximizing
sequence:
\begin{equation}
\int_{0}^{t}u\left(s\right)\mbox{d}s<K_{1}\left(x_{0}\right)e^{\rho t}\quad\forall t\geq0.\label{eq: stima cruciale massimizz}
\end{equation}

\begin{proof}

i) Let $x_{0}\geq0$, $u\in\Lambda\left(x_{0}\right)$, $x=x\left(\cdot;x_{0},u\right)$
and $\mathcal{B}\left(u\right)=\mathcal{B}\left(x_{0};u\right)$.

First assume that
\begin{equation}
\int_{0}^{+\infty}u\left(t\right)\mbox{d}t,\ \int_{0}^{+\infty}e^{-\rho t}u\left(t\right)\mbox{d}t<+\infty.\label{integrali u}
\end{equation}
We estimate the quantity
\[
\int_{0}^{+\infty}e^{-\rho t}x^{2}\left(t\right)\mbox{d}t
\]
in terms of the quantities in $\eqref{integrali u}$.

\emph{From above}: by $\eqref{eq: comparison per singola}$, we have
for every $t\geq0$:
\begin{eqnarray*}
0\leq x\left(t\right) & \leq & e^{-bt}x_{0}+\frac{M}{b}+e^{-bt}\int_{0}^{t}e^{bs}u\left(s\right)\mbox{d}s.
\end{eqnarray*}
Hence:
\begin{eqnarray}
x^{2}\left(t\right) & \leq & e^{-bt}\left(x_{0}\vee x_{0}^{2}\right)\left(1+\frac{2M}{b}\right)+\frac{M^{2}}{b^{2}}+e^{-2bt}\left(\int_{0}^{t}e^{bs}u\left(s\right)\mbox{d}s\right)^{2}\nonumber \\
 &  & +2\left(x_{0}\vee\frac{M}{b}\right)e^{-bt}\int_{0}^{t}e^{bs}u\left(s\right)\mbox{d}s.\label{eq: stima x^2 alto}
\end{eqnarray}
Focusing on the last two terms leads to the estimate
\begin{eqnarray}
\int_{0}^{+\infty}e^{-\rho t}e^{-2bt}\left(\int_{0}^{t}e^{bs}u\left(s\right)\mbox{d}s\right)^{2}\mbox{d}t & \leq & \int_{0}^{+\infty}e^{-\rho t}\left(\int_{0}^{t}u\left(s\right)\mbox{d}s\right)^{2}\mbox{d}t\nonumber \\
 & \leq & \frac{1}{\rho}\left(\int_{0}^{+\infty}u\left(s\right)\mbox{d}s\right)^{2}\label{eq: stima int x^2 pezzo 1}
\end{eqnarray}
and
\begin{eqnarray}
\int_{0}^{+\infty}e^{-\rho t}e^{-bt}\int_{0}^{t}e^{bs}u\left(s\right)\mbox{d}s\mbox{d}t & = & \int_{0}^{+\infty}e^{bs}u\left(s\right)\int_{s}^{+\infty}e^{-\left(\rho+b\right)t}\mbox{d}t\mbox{d}s\nonumber \\
 & = & \frac{1}{\rho+b}\int_{0}^{+\infty}e^{bs}u\left(s\right)e^{-\left(\rho+b\right)s}\mbox{d}s\nonumber \\
 & = & \frac{1}{\rho+b}\int_{0}^{+\infty}e^{-\rho s}u\left(s\right)\mbox{d}s.\label{eq: uguagl int x^2 pezzo 2}
\end{eqnarray}
By $\eqref{eq: stima x^2 alto}$, $\eqref{eq: stima int x^2 pezzo 1}$
and $\eqref{eq: uguagl int x^2 pezzo 2}$ we see that there exists
a constant $L\left(b,x_{0}\right)\geq0$ such that
\begin{eqnarray}
\int_{0}^{+\infty}e^{-\rho t}x^{2}\left(t\right)\mbox{d}t & \leq & L\left(b,x_{0}\right)+\frac{1}{\rho}\left(\int_{0}^{+\infty}u\left(t\right)\mbox{d}t\right)^{2}\nonumber \\
 &  & +2\left(x_{0}\vee\frac{M}{b}\right)\frac{1}{\rho+b}\int_{0}^{+\infty}e^{-\rho t}u\left(t\right)\mbox{d}t.\label{eq: stima int x^2 alto}
\end{eqnarray}
\emph{From below}: again by $\eqref{eq: comparison per singola}$:
\begin{eqnarray*}
\forall t\geq0:x\left(t\right) & \geq & e^{-b_{0}t}\left(x_{0}+\int_{0}^{t}e^{b_{0}s}u\left(s\right)\mbox{d}s\right)\\
 & \geq & e^{-b_{0}t}\int_{0}^{t}e^{b_{0}s}u\left(s\right)\mbox{d}s.
\end{eqnarray*}
Hence, since $t\to\rho e^{-\rho t}\mbox{d}t$ is a probability measure,
we have by Jensen's inequality:
\begin{eqnarray}
\int_{0}^{+\infty}e^{-\rho t}x^{2}\left(t\right)\mbox{d}t & \geq & \rho\left(\int_{0}^{+\infty}e^{-\rho t}x\left(t\right)\mbox{d}t\right)^{2}\nonumber \\
 & \geq & \rho\left(\int_{0}^{+\infty}e^{-\rho t}e^{-b_{0}t}\int_{0}^{t}e^{b_{0}s}u\left(s\right)\mbox{d}s\mbox{d}t\right)^{2}\nonumber \\
 & = & \frac{\rho}{\left(\rho+b_{0}\right)^{2}}\left(\int_{0}^{+\infty}e^{-\rho s}u\left(s\right)\mbox{d}s\right)^{2}\label{eq: stima int x^2 basso}
\end{eqnarray}
and the last equality holds by $\eqref{eq: uguagl int x^2 pezzo 2}$.

The finiteness of the integrals in $\eqref{integrali u}$ implies
that the application of Fubini's Theorem in $\eqref{eq: uguagl int x^2 pezzo 2}$
and in $\eqref{eq: stima int x^2 basso}$ are appropriate.

Relation $\eqref{eq: stima int x^2 basso}$ allows us to write down
the following estimate for $\mathcal{B}\left(u\right)$, using again
Jensen's inequality (in relation with the concave function log):
\begin{eqnarray}
\mathcal{B}\left(u\right) & = & \int_{0}^{+\infty}e^{-\rho t}\log u\left(t\right)\mbox{d}t-c\int_{0}^{+\infty}e^{-\rho t}x^{2}\left(t\right)\mbox{d}t\nonumber \\
 & \leq & \frac{1}{\rho}\log\left(\rho\int_{0}^{+\infty}e^{-\rho t}u\left(t\right)\mbox{d}t\right)-\frac{c}{\rho\left(\rho+b_{0}\right)^{2}}\left(\rho\int_{0}^{+\infty}e^{-\rho t}u\left(t\right)\mbox{d}t\right)^{2}\label{eq: stima B Paolo}\\
 & \leq & \frac{1}{\rho}\max_{z>0}\left(\log z-\frac{c}{\left(\rho+b_{0}\right)^{2}}z^{2}\right)=\frac{1}{\rho}\left(\log\frac{\rho+b_{0}}{\sqrt{2c}}-\frac{1}{2}\right)\label{eq: stima B Paolo max}\\
 & = & \frac{1}{\rho}\log\left(\frac{\rho+b_{0}}{\sqrt{2ec}}\right).
\end{eqnarray}
This holds under condition $\eqref{integrali u}$. In the opposite
case, that is to say $\int_{0}^{+\infty}e^{-\rho t}u\left(t\right)\mbox{d}t=+\infty$,
consider a sequence $\left(u_{n}\right)_{n\in\mathbb{N}}$ like in
Remark $\eqref{oss: approx semplici}$. Hence
\begin{eqnarray}
\mathcal{B}\left(u\right) & \leq & \liminf_{n\to+\infty}\mathcal{B}\left(u_{n}\right)\leq\liminf_{n\to+\infty}\frac{1}{\rho}\log\left(\rho\int_{0}^{+\infty}e^{-\rho t}u_{n}\left(t\right)\mbox{d}t\right)\nonumber \\
 &  & -\frac{c}{\rho\left(\rho+b_{0}\right)^{2}}\left(\rho\int_{0}^{+\infty}e^{-\rho t}u_{n}\left(t\right)\mbox{d}t\right)^{2}\nonumber \\
 & = & \lim_{z\to+\infty}\left(\frac{1}{\rho}\log z-\frac{c}{\rho\left(\rho+b_{0}\right)^{2}}z^{2}\right)=-\infty,\label{eq: B - infty}
\end{eqnarray}
since $\int_{0}^{+\infty}e^{-\rho t}u_{n}\left(t\right)\mbox{d}t\to\int_{0}^{+\infty}e^{-\rho t}u\left(t\right)\mbox{d}t$,
by monotone convergence.

In the intermediate case, that is to say
\[
\int_{0}^{+\infty}e^{-\rho t}u\left(t\right)\mbox{d}t<+\infty,\quad\int_{0}^{+\infty}u\left(t\right)\mbox{d}t=+\infty,
\]
let again $\left(u_{n}\right)_{n\in\mathbb{N}}$ be as in Remark $\eqref{oss: approx semplici}$.
We have:
\begin{eqnarray*}
\mathcal{B}\left(u\right) & \leq & \liminf_{n\to+\infty}\mathcal{B}\left(u_{n}\right)\leq\frac{1}{\rho}\log\left(\lim_{n\to+\infty}\rho\int_{0}^{+\infty}e^{-\rho t}u_{n}\left(t\right)\mbox{d}t\right)\\
 &  & -\frac{c}{\rho\left(\rho+b\right)^{2}}\left(\lim_{n\to+\infty}\rho\int_{0}^{+\infty}e^{-\rho t}u_{n}\left(t\right)\mbox{d}t\right)^{2}\\
 & = & \frac{1}{\rho}\log\left(\rho\int_{0}^{+\infty}e^{-\rho t}u\left(t\right)\mbox{d}t\right)-\frac{c}{\rho\left(\rho+b_{0}\right)^{2}}\left(\rho\int_{0}^{+\infty}e^{-\rho t}u\left(t\right)\mbox{d}t\right)^{2}\\
 & \leq & \frac{1}{\rho}\log\left(\frac{\rho+b_{0}}{\sqrt{2ec}}\right).
\end{eqnarray*}
Taking the sup among $u\in\Lambda\left(x_{0}\right)$, we see that
the same estimate holds for $V\left(x_{0}\right)$.

ii) Suppose that $u$ belongs to a maximizing sequence, and assume
that $\mathcal{B}\left(u\right)>V\left(x_{0}\right)-1$. Fix $\tilde{K}\left(x_{0}\right)\geq0$
such that 
\[
\frac{1}{\rho}\log z-\frac{c}{\rho\left(\rho+b_{0}\right)^{2}}z^{2}\leq V\left(x_{0}\right)-1\quad\forall z>\tilde{K}\left(x_{0}\right).
\]
We showed at point $i)$ that if $\int_{0}^{+\infty}e^{-\rho t}u\left(t\right)\mbox{d}t<+\infty$,
then relation $\eqref{eq: stima B Paolo}$, holds. Thus in this case
it must be 
\begin{equation}
\int_{0}^{+\infty}e^{-\rho t}u\left(t\right)\mbox{d}t\leq\frac{1}{\rho}\tilde{K}\left(x_{0}\right)=:K_{1}\left(x_{0}\right).\label{eq: unif bound int u}
\end{equation}
The case $\int_{0}^{+\infty}e^{-\rho t}u\left(t\right)\mbox{d}t=+\infty$
implies $\mathcal{B}\left(u\right)=-\infty$ by $\eqref{eq: B - infty}$,
and consequently must be excluded, since $u$ belongs to a maximizing
sequence (see Remark $\ref{B div -infty}$).

This proves relation $\eqref{eq: stima u massimizz}$.

In order to prove $\eqref{eq: stima x massimizz}$, observe that by
$\eqref{eq: comparison per singola}$ we have:
\begin{eqnarray*}
\int_{0}^{+\infty}e^{-\rho t}x\left(t\right)\mbox{d}t & \leq & \int_{0}^{+\infty}e^{-\rho t}\left\{ e^{-bt}x_{0}+\int_{0}^{t}e^{b\left(s-t\right)}\left(1+u\left(s\right)\right)\mbox{d}s\right\} \mbox{d}t\\
 & = & x_{0}\int_{0}^{+\infty}e^{-\left(\rho+b\right)t}\mbox{d}t+\int_{0}^{+\infty}e^{-\left(\rho+b\right)t}\int_{0}^{t}e^{bs}\mbox{d}s\mbox{d}t\\
 &  & +\int_{0}^{+\infty}e^{-\left(\rho+b\right)t}\int_{0}^{t}e^{bs}u\left(s\right)\mbox{d}s\mbox{d}t\\
 & = & \frac{x_{0}}{\rho+b}+\int_{0}^{+\infty}e^{bs}\int_{s}^{+\infty}e^{-\left(\rho+b\right)t}\mbox{d}t\mbox{d}s\\
 &  & +\int_{0}^{+\infty}u\left(s\right)e^{bs}\int_{s}^{+\infty}e^{-\left(\rho+b\right)t}\mbox{d}t\mbox{d}s\\
 & = & \frac{x_{0}}{\rho+b}+\frac{1}{\rho\left(\rho+b\right)}+\frac{1}{\rho+b}\int_{0}^{+\infty}e^{-\rho t}u\left(t\right)\mbox{d}t\\
 & \leq & \frac{x_{0}}{\rho+b}+\frac{1}{\rho\left(\rho+b\right)}+\frac{K_{1}\left(x_{0}\right)}{\rho+b}\\
 & =: & K_{2}\left(x_{0}\right)
\end{eqnarray*}
$ $
\end{proof}
$ $

\section{Uniform localization lemmas}
\begin{lemma}
\label{lem: loc alto}There exists a function $N:\left[0,+\infty\right)^{2}\to\left(0,+\infty\right)$,
continuous and strictly increasing in the second variable, such that:
for every $x_{0},T>0$ and for every $u\in\Lambda\left(x_{0}\right)$
belonging to a maximizing sequence, there exists a control $\tilde{u}^{T}\in\Lambda\left(x_{0}\right)$
satisfying:
\begin{align*}
 & \mathcal{B}\left(x_{0};\tilde{u}^{T}\right)\geq\mathcal{B}\left(x_{0};u\right)\\
 & \tilde{u}^{T}=u\wedge N\left(x_{0},T\right)\quad\mbox{a. e. in }\left[0,T\right].
\end{align*}
In particular, the norm $\left\Vert \tilde{u}^{T}\right\Vert _{L^{\infty}\left(\left[0,T\right]\right)}$
is bounded above by a quantity which does not depend on the original
control $u$.

Moreover, the state $x\left(\cdot;\tilde{u}^{T},x_{0}\right)$ associated
with the control $\tilde{u}^{T}$ satisfies
\[
x\left(\cdot;\tilde{u}^{T},x_{0}\right)\leq x\left(\cdot;u,x_{0}\right).
\]

Eventually, the bound function $N$ satisfies:
\begin{equation}
\lim_{T\to+\infty}Te^{-\rho T}\log N\left(x_{0},T\right)=0.\label{eq: logN a infinito}
\end{equation}
\end{lemma}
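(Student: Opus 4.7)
My strategy is to choose the modified control explicitly as
\[
\tilde{u}^{T}(t):=\begin{cases}u(t)\wedge N, & t\in[0,T],\\ u(t), & t>T,\end{cases}
\]
with $N=N(x_{0},T)>0$ to be selected. Admissibility is immediate because $u>0$ a.e.\ forces $\tilde{u}^{T}>0$ a.e. Writing $x:=x(\cdot\,;x_{0},u)$ and $\tilde{x}:=x(\cdot\,;x_{0},\tilde{u}^{T})$, the comparison in Remark \ref{remark comp ODE} (with $\tilde{u}^{T}\leq u$) gives $\tilde{x}\leq x$, which is the trajectory inequality in the statement. Everything then reduces to choosing $N$ so that
\[
\int_{\{u>N\}\cap[0,T]}e^{-\rho t}\log(u/N)\,dt\;\leq\;c\int_{0}^{+\infty}e^{-\rho t}(x^{2}-\tilde{x}^{2})\,dt.
\]

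For the left side (the ``logarithmic loss''), the elementary bound $\log y\leq y-1$ applied to $y=u/N>1$ on $E:=\{u>N\}\cap[0,T]$ yields at once
\[
\mathrm{loss}\;\leq\;\frac{1}{N}\int_{0}^{T}e^{-\rho t}(u-N)^{+}\,dt\;=:\;\frac{I}{N}.
\]
The right side is the delicate ingredient. The naive bound $x^{2}-\tilde{x}^{2}\geq(x-\tilde{x})^{2}$ combined with Remark \ref{remark funzione h} and Fubini gives only $\mathrm{gain}\gtrsim I^{2}$, which cannot dominate the linear loss $I/N$ for small $I$. I will therefore use the sharper factorization
\[
x^{2}-\tilde{x}^{2}=(x+\tilde{x})(x-\tilde{x})\geq 2\tilde{x}\,\Delta,\qquad\Delta:=x-\tilde{x}\geq 0,
\]
together with the pointwise lower envelope $\tilde{x}(t)\geq x_{0}e^{-b_{0}t}$, which follows from $\dot{\tilde{x}}=F(\tilde{x})+\tilde{u}^{T}\geq -b_{0}\tilde{x}$ and crucially uses $x_{0}>0$.

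Plugging in $\Delta(t)\geq\int_{0}^{t}e^{-b_{0}(t-s)}v(s)\,ds$ (from Remark \ref{remark funzione h} with $h\geq -b_{0}$, where $v:=(u-N)^{+}\mathbf{1}_{[0,T]}$), a Fubini computation yields
\[
c\int_{0}^{+\infty}e^{-\rho t}(x^{2}-\tilde{x}^{2})\,dt\;\geq\;\frac{2cx_{0}}{\rho+2b_{0}}\int_{0}^{T}e^{-(\rho+b_{0})s}v(s)\,ds\;\geq\;\frac{2cx_{0}e^{-b_{0}T}}{\rho+2b_{0}}\,I,
\]
the last step using $e^{-b_{0}s}\geq e^{-b_{0}T}$ on $[0,T]$. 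Setting
\[
N(x_{0},T)\;:=\;\frac{\rho+2b_{0}}{2cx_{0}}\,e^{b_{0}T}
\]
therefore makes the gain bounded below by $I/N$, which compensates the loss and gives $\mathcal{B}(x_{0};\tilde{u}^{T})\geq\mathcal{B}(x_{0};u)$. The resulting $N$ is positive, continuous and strictly increasing in $T$, and $\log N(x_{0},T)=b_{0}T+O(1)$, so $Te^{-\rho T}\log N(x_{0},T)\to 0$ as $T\to+\infty$, yielding \eqref{eq: logN a infinito}.

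The genuine obstacle is producing a \emph{linear}-in-$I$ lower bound for the gain to match the linear-in-$I$ upper bound for the loss; everything else is routine comparison and Fubini. This matching is achieved only via the $2\tilde{x}\Delta$ factorization combined with the envelope $\tilde{x}(t)\geq x_{0}e^{-b_{0}t}$, which is precisely the step that fails for $x_{0}=0$ and which forces the exponential growth of $N$ in $T$ --- still mild enough for \eqref{eq: logN a infinito}, and hence for the downstream diagonalization.
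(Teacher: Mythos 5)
Your proof is correct (for $x_{0}>0$, which is what the statement demands), but it goes by a genuinely different route than the paper's. The paper does \emph{not} simply truncate: it redistributes the truncated mass $\tilde{I}=\int_{0}^{T}(u-u\wedge N)\,\mathrm{d}t$ onto the interval $(T,T+\beta_{T}]$, where $\beta_{T}$ is tuned so that the perturbed trajectory never exceeds the original one; the comparison of the two payoffs then follows from a one-sided Jensen estimate on the logarithmic term combined with the a priori bound from Proposition~\ref{prop: succ max}, and the $cx^{2}$ part contributes a sign for free. Your variant skips redistribution entirely, taking $\tilde{u}^{T}=u\wedge N$ on $[0,T]$ and $\tilde{u}^{T}=u$ elsewhere, which makes admissibility and the trajectory inequality immediate; the work is shifted to producing a \emph{linear}-in-$I$ lower bound for the $cx^{2}$ gain, which you get from the factorization $x^{2}-\tilde{x}^{2}\geq 2\tilde{x}\,\Delta$, the representation $\Delta(t)\geq\int_{0}^{t}e^{-b_{0}(t-s)}(u-N)^{+}\chi_{[0,T]}\,\mathrm{d}s$ from Remark~\ref{remark funzione h}, and the floor $\tilde{x}(t)\geq x_{0}e^{-b_{0}t}$. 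That floor is the only place $x_{0}>0$ enters, and it is what forces the exponential growth $N(x_{0},T)\sim e^{b_{0}T}$, still compatible with~\eqref{eq: logN a infinito}. Your argument is more elementary and, interestingly, does not actually use the ``maximizing sequence'' hypothesis in the construction of $N$ (it is only needed to ensure the integrals defining $\mathcal{B}$ are finite so the comparison is meaningful), whereas the paper's $N$ is built around the constant $K_{1}(x_{0})$ from Proposition~\ref{prop: succ max}. The trade-offs are: (i) your $N(x_{0},T)=\frac{\rho+2b_{0}}{2cx_{0}}e^{b_{0}T}$ blows up as $x_{0}\to 0^{+}$, so it does not extend to a continuous map on all of $[0,+\infty)^{2}$ as the lemma literally asks and as the paper's $N$ does (the paper's construction is uniform down to $x_{0}=0$, which is used later when the initial state is allowed to be $\geq 0$); (ii) your $N$ may be $<1$ for large $x_{0}$ and small $T$, which would need to be patched to recover the inequality $\eta(x_{0},T)<N(x_{0},T)$ invoked in Lemma~\ref{lem: loc basso}. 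Both issues are cosmetic for the stated claim and can be repaired by cutting off $N$ from below, but they explain why the paper opts for the redistribution device.
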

\begin{proof}
Fix $x_{0}$ and $T\geq0$. The equation
\begin{equation}
\log\beta+\beta b_{0}=-Tb_{0},\quad\mbox{\ensuremath{\beta}>0}\label{eq: def beta}
\end{equation}
has a unique solution, which is strictly less than $1$. Call this
solution $\beta_{T}$, and define 
\begin{equation}
N\left(x_{0},T\right):=K\left(x_{0}\right)\beta_{T}^{-2}e^{2\rho\left(T+\beta_{T}\right)},\label{eq: def N}
\end{equation}
where $K\left(x_{0}\right)=K_{1}\left(x_{0}\right)\vee1$ and $K_{1}\left(x_{0}\right)$
is the constant introduced in Proposition $\ref{prop: succ max}$.

Now fix $u\in\Lambda\left(x_{0}\right)$ such that $u$ belongs to
a maximizing sequence. If $u\leq N\left(x_{0},T\right)$ almost everywhere
in $\left[0,T\right]$, then set $\tilde{u}^{T}:=u$, and the proof
is over.

If there exists a non-negligible subset of $\left[0,T\right]$ in
which $u>N\left(x_{0},T\right)$ then define

\begin{align*}
 & \tilde{I}:=\int_{0}^{T}\left(u\left(t\right)-u\left(t\right)\wedge N\left(x_{0},T\right)\right)\mbox{d}t\\
 & \tilde{u}^{T}:=u\wedge N\left(x_{0},T\right)\cdot\chi_{\left[0,T\right]}+\left(u+\tilde{I}\right)\cdot\chi_{\left(T,T+\beta_{T}\right]}+u\cdot\chi_{\left(T+\beta_{T},+\infty\right)}.
\end{align*}
Obviously $\tilde{u}^{T}\in\Lambda\left(x_{0}\right)$, since $u\in\Lambda\left(x_{0}\right)$
and $N\left(x_{0},T\right)>0$.

First we prove that \textbf{
\begin{equation}
0\leq x\left(\cdot;\tilde{u}^{T},x_{0}\right)\leq x\left(\cdot;u,x_{0}\right)\quad\mbox{in }\left[0,+\infty\right)\label{eq: orbita alto meno di orig}
\end{equation}
}Clearly $x\left(\cdot;\tilde{u}^{T},x_{0}\right)\geq0$, by the admissibility
of $\tilde{u}^{T}$. For simplicity of notation we set $N=N\left(x_{0},T\right)$,
$\tilde{x}_{T}=x\left(\cdot;\tilde{u}^{T},x_{0}\right)$ and $x=x\left(\cdot;u,x_{0}\right)$.

Obviously $\tilde{x}_{T}\leq x$ in $\left[0,T\right]$, by Remark
$\ref{remark comp ODE}$.

Fix $t\in\left(T,T+\beta_{T}\right]$, and set $h:=h\left(\tilde{x}_{T},x\right)$,
like in Remark $\ref{remark funzione h}$. Hence:

\begin{eqnarray*}
\tilde{x}_{T}\left(t\right)-x\left(t\right) & = & \int_{0}^{T}\exp\left(\int_{s}^{t}h\mbox{d}\tau\right)\left(u\left(s\right)\wedge N-u\left(s\right)\right)\mbox{d}s\\
 &  & +\tilde{I}\int_{T}^{t}\exp\left(\int_{s}^{t}h\mbox{d}\tau\right)\mbox{d}s.
\end{eqnarray*}
The first addend is estimated in the following way:
\begin{eqnarray*}
\int_{0}^{T}\exp\left(\int_{s}^{t}h\mbox{d}\tau\right)\left(u\left(s\right)\wedge N-u\left(s\right)\right)\mbox{d}s & \leq & \int_{0}^{T}e^{\left(s-t\right)b_{0}}\left(u\left(s\right)\wedge N-u\left(s\right)\right)\mbox{d}s\\
 & \leq & e^{-tb_{0}}\int_{0}^{T}\left(u\left(s\right)\wedge N-u\left(s\right)\right)\mbox{d}s\\
 & \leq & e^{-\left(T+\beta_{T}\right)b_{0}}\int_{0}^{T}\left(u\left(s\right)\wedge N-u\left(s\right)\right)\mbox{d}s\\
 & = & -\tilde{I}e^{-\left(T+\beta_{T}\right)b_{0}}.
\end{eqnarray*}
Since $h\leq0$, the second addend is estimated from above by $\tilde{I}\beta_{T}$.

Thus we obtain:
\[
\tilde{x}_{T}\left(t\right)-x\left(t\right)\leq\tilde{I}\left(\beta_{T}-e^{-\left(T+\beta_{T}\right)b_{0}}\right),
\]
and the last quantity is zero, by definition of $\beta_{T}$.

This implies that $\tilde{x}_{T}\leq x$ also in $\left(T+\beta_{T},+\infty\right)$,
again by Remark $\ref{remark comp ODE}$. Hence, relation $\eqref{eq: orbita alto meno di orig}$
holds.

Now we estimate the ``logarithmic'' part of the difference between
$\mathcal{B}\left(x_{0};\tilde{u}^{T}\right)$ and $\mathcal{B}\left(x_{0};u\right)$.
By the concavity of log, we have:
\begin{eqnarray*}
 &  & \int_{0}^{+\infty}e^{-\rho t}\left(\log\tilde{u}^{T}\left(t\right)-\log u\left(t\right)\right)\mbox{d}t\\
 & = & \int_{0}^{T}e^{-\rho t}\left\{ \log\left(u\left(t\right)\wedge N\right)-\log u\left(t\right)\right\} \mbox{d}t\\
 &  & +\int_{T}^{T+\beta_{T}}e^{-\rho t}\left\{ \log\left(u\left(t\right)+\tilde{I}\right)-\log u\left(t\right)\right\} \mbox{d}t\\
 & \geq & \int_{0}^{T}e^{-\rho t}\left(u\left(t\right)\wedge N\right)^{-1}\left\{ u\left(t\right)\wedge N-u\left(t\right)\right\} \mbox{d}t\\
 &  & +\tilde{I}\int_{T}^{T+\beta_{T}}e^{-\rho t}\left(u\left(t\right)+\tilde{I}\right)^{-1}\mbox{d}t
\end{eqnarray*}
\begin{eqnarray}
 & = & \frac{1}{N}\int_{0}^{T}e^{-\rho t}\left\{ u\left(t\right)\wedge N-u\left(t\right)\right\} \mbox{d}t\nonumber \\
 &  & +\tilde{I}\int_{T}^{T+\beta_{T}}e^{-\rho t}\left(u\left(t\right)+\tilde{I}\right)^{-1}\mbox{d}t\nonumber \\
 & \geq & \frac{1}{N}\int_{0}^{T}\left(u\left(t\right)\wedge N-u\left(t\right)\right)\mbox{d}t\nonumber \\
 &  & +\tilde{I}\int_{T}^{T+\beta_{T}}e^{-\rho t}\left(u\left(t\right)+\tilde{I}\right)^{-1}\mbox{d}t\nonumber \\
 & = & \tilde{I}\left(\int_{T}^{T+\beta_{T}}e^{-\rho t}\left(u\left(t\right)+\tilde{I}\right)^{-1}\mbox{d}t-\frac{1}{N}\right).\label{eq: loc alto meglio 1}
\end{eqnarray}
Moreover, by Jensen's inequality:
\begin{eqnarray*}
\int_{T}^{T+\beta_{T}}e^{-\rho t}\left(u\left(t\right)+\tilde{I}\right)^{-1}\mbox{d}t & \geq & e^{-\rho\left(T+\beta_{T}\right)}\int_{T}^{T+\beta_{T}}\left(u\left(t\right)+\tilde{I}\right)^{-1}\mbox{d}t\\
 & \geq & \beta_{T}^{2}e^{-\rho\left(T+\beta_{T}\right)}\frac{1}{\int_{T}^{T+\beta_{T}}\left(u\left(t\right)+\tilde{I}\right)\mbox{d}t}\\
 & \geq & \beta_{T}^{2}e^{-\rho\left(T+\beta_{T}\right)}\frac{1}{\int_{T}^{T+\beta_{T}}u\left(t\right)\mbox{d}t+\tilde{I}}\\
 & \geq & \beta_{T}^{2}e^{-\rho\left(T+\beta_{T}\right)}\frac{1}{\int_{0}^{T+\beta_{T}}u\left(t\right)\mbox{d}t}
\end{eqnarray*}
where the penultimate inequality holds since $\beta_{T}<1$.

$ $

Now by Proposition $\ref{prop: succ max}$ we can complete this estimate
in the following way:
\begin{eqnarray}
\int_{T}^{T+\beta_{T}}e^{-\rho t}\left(u\left(t\right)+\tilde{I}\right)^{-1}\mbox{d}t & \geq & K\left(x_{0}\right)^{-1}\beta_{T}^{2}e^{-2\rho\left(T+\beta_{T}\right)}\nonumber \\
 & =: & \alpha\left(x_{0},T\right).\label{eq: loc alto meglio 2}
\end{eqnarray}
Observe that, by definition, $N\left(x_{0},T\right)=\alpha\left(x_{0},T\right)^{-1}$.
Hence, joining $\eqref{eq: loc alto meglio 1}$ with $\eqref{eq: loc alto meglio 2}$
we obtain 
\begin{eqnarray}
\int_{0}^{+\infty}e^{-\rho t}\left(\log\tilde{u}^{T}\left(t\right)-\log u\left(t\right)\right)\mbox{d}t & \geq & \tilde{I}\left(\alpha\left(x_{0},T\right)-\frac{1}{N\left(x_{0},T\right)}\right)=0.\label{eq: loc alto meglio 3}
\end{eqnarray}
This implies, by $\eqref{eq: orbita alto meno di orig}$:
\begin{eqnarray*}
\mathcal{B}\left(x_{0};\tilde{u}^{T}\right)-\mathcal{B}\left(x_{0};u\right) & = & \int_{0}^{+\infty}e^{-\rho t}\left(\log\tilde{u}^{T}\left(t\right)-\log u\left(t\right)\right)\mbox{d}t\\
 &  & -c\int_{0}^{+\infty}e^{-\rho t}\left\{ \tilde{x}_{T}^{2}\left(t\right)-x^{2}\left(t\right)\right\} \mbox{d}t\\
 & \geq & 0.
\end{eqnarray*}

Finally we prove the monotonicity of $N\left(x_{0},T\right)$ in $T$
.

First observe that $T\to\beta_{T}$ is clearly a strictly decreasing
function, since the function $\beta\to\log\beta+\beta b_{0}$ is strictly
increasing, and remembering equation $\eqref{eq: def beta}$.

Moreover, the function $T\to T+\beta_{T}$ is strictly increasing.
Indeed, set $f\left(x\right):=\log x+b_{0}x$ and let $\phi$ be the
inverse of $f$. Then $\beta_{T}=\phi\left(-Tb_{0}\right)$, and:
\[
\frac{\mbox{d}}{\mbox{d}T}\left(T+\beta_{T}\right)=1-b_{0}\phi'\left(-Tk\right)=1-\frac{b_{0}}{f'\left(\beta_{T}\right)}=1-\frac{b_{0}\beta_{T}}{1+b_{0}\beta_{T}}>0.
\]
This shows that $N\left(x_{0},\cdot\right)$ is strictly increasing.

Finally observe that:
\begin{equation}
\beta_{T}\sim e^{-Tb_{0}}\quad\mbox{for }T\to+\infty.\label{eq: andamento beta}
\end{equation}
Indeed, with $f$ defined as before, we have:
\[
\lim_{x\to0^{+}}\frac{f\left(x\right)}{\log x}=1.
\]
Hence $\phi\left(y\right)\sim e^{y}$ for $y\to-\infty$ and $\beta_{T}=\phi\left(-Tb_{0}\right)\sim e^{-Tb_{0}}$
for $T\to+\infty$.

It follows from $\eqref{eq: andamento beta}$ and $\eqref{eq: def N}$,
that:
\begin{eqnarray*}
Te^{-\rho T}\log N\left(x_{0},T\right) & = & Te^{-\rho T}\log K\left(x_{0}\right)+Te^{-\rho T}\log\left(\beta_{T}^{-2}\right)\\
 &  & +2\rho Te^{-\rho T}\left(T+\beta_{T}\right)\\
 & \sim & Te^{-\rho T}\log\left(\beta_{T}^{-2}\right)\\
 & \sim & 2T^{2}e^{-\rho T}b_{0}\quad\mbox{for }T\to+\infty.
\end{eqnarray*}
This shows that $\eqref{eq: logN a infinito}$ holds.
\end{proof}
$ $
\begin{lemma}
\label{lem: loc basso}There exists a function $\eta:\left[0,+\infty\right)^{2}\to\left(0,+\infty\right)$,
continuous and strictly decreasing in the second variable, with the
following property:
\begin{align*}
i)\  & \eta\left(x_{0},T\right)<N\left(x_{0},T\right)\quad\forall T>0
\end{align*}
where $N$ is the function defined in Lemma $\ref{lem: loc alto}$;\\

$ii)\,$ for every $x_{0}\geq0$ and every $T\geq1$, if $u\in\Lambda\left(x_{0}\right)$
belongs to a maximizing sequence, there exists $u^{T}\in\Lambda\left(x_{0}\right)$
such that
\begin{align*}
 & \mathcal{B}\left(x_{0};u^{T}\right)\geq\mathcal{B}\left(x_{0};u\right)\\
 & u^{T}=\left(u\wedge N\left(x_{0},T\right)\right)\vee\eta\left(x_{0},T\right)\quad\mbox{a. e. in }\left[0,T\right].
\end{align*}
In particular the norm $\left\Vert \log u^{T}\right\Vert _{L^{\infty}\left(\left[0,T\right]\right)}$
is bounded above by a quantity which does not depend on $u$.\end{lemma}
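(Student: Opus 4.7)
The plan is to mirror the construction of Lemma~\ref{lem: loc alto}, but truncating from below rather than from above. First I would invoke Lemma~\ref{lem: loc alto} on the given $u$ to reduce to the case where $u\leq N\left(x_{0},T\right)$ a.e.\ in $\left[0,T\right]$, at no cost in $\mathcal{B}\left(x_{0};u\right)$ and with the associated state pointwise no larger; the resulting control still belongs to a maximizing sequence. Then the natural candidate is
\[
u^{T}\left(t\right):=\begin{cases} u\left(t\right)\vee\eta\left(x_{0},T\right) & t\in\left[0,T\right]\\ u\left(t\right) & t>T,\end{cases}
\]
with \emph{no} compensation on $\left(T,+\infty\right)$; it is admissible, and setting $\tilde{J}:=\int_{0}^{T}\left(\eta-u\right)^{+}\mbox{d}t\leq\eta T$, formula~\eqref{eq: comparison da 0} combined with $h\leq 0$ yields the uniform state estimate $0\leq x\left(\cdot;x_{0},u^{T}\right)-x\left(\cdot;x_{0},u\right)\leq\tilde{J}$.

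The heart of the argument is to show that the logarithmic gain dominates the quadratic cost. Using the tangent-line inequality $\log b-\log a\geq\left(b-a\right)/b$ valid for $b\geq a>0$ applied pointwise on the set where $u<\eta$ (which sits inside $\left[0,T\right]$), the log part of $\mathcal{B}\left(x_{0};u^{T}\right)-\mathcal{B}\left(x_{0};u\right)$ is bounded below by $\left(e^{-\rho T}/\eta\right)\tilde{J}$; the quadratic part, via the factorization $x^{2}\left(u^{T}\right)-x^{2}\left(u\right)\leq\tilde{J}\left(2x\left(u\right)+\tilde{J}\right)$ combined with~\eqref{eq: stima x massimizz}, is bounded above by $2cK_{2}\left(x_{0}\right)\tilde{J}+c\tilde{J}^{2}/\rho$. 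Combining, and using $\tilde{J}\leq\eta T$ inside the quadratic remainder, I obtain
\[
\mathcal{B}\left(x_{0};u^{T}\right)-\mathcal{B}\left(x_{0};u\right)\geq\tilde{J}\left(\frac{e^{-\rho T}}{\eta}-2cK_{2}\left(x_{0}\right)-\frac{c\eta T}{\rho}\right),
\]
which is nonnegative once $\eta$ is chosen small enough in $T$.

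The last step is to fix $\eta\left(x_{0},T\right)$ explicitly so as to enforce nonnegativity of the bracket above for every $T\geq 1$; a continuous, strictly decreasing choice (depending continuously on $x_{0}$) of the form $c_{0}\left(x_{0}\right)e^{-CT}$, with $C$ and $1/c_{0}\left(x_{0}\right)$ large enough, does the job. Since $N\left(x_{0},T\right)$ grows at least like $e^{2\rho T}$ by~\eqref{eq: def N}, the constraint $\eta<N$ is automatic after possibly shrinking $c_{0}$. The claimed $L^{\infty}$-bound on $\log u^{T}$ on $\left[0,T\right]$ then follows from $\eta\leq u^{T}\leq N$ there. The main obstacle I anticipate is showing that the gain dominates the cost \emph{uniformly} across maximizing sequences: a priori a control only slightly below $\eta$ on a small set would produce a tiny log gain, and it is not obvious the state cost is equally small. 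The resolution is the observation that both the gain and the state-increase cost depend on $u$ only through $\tilde{J}$, so the required comparison collapses to the $u$-independent inequality $e^{-\rho T}/\eta\geq 2cK_{2}\left(x_{0}\right)+c\eta T/\rho$, which can be secured once and for all by the choice of $\eta$.
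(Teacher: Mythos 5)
Your proposal is correct and follows essentially the same route as the paper's own proof: first pass through Lemma~\ref{lem: loc alto} to cap the control by $N(x_0,T)$ on $[0,T]$, then raise it to $\eta(x_0,T)$ on $[0,T]$ with no further compensation past $T$, control the state increment by the truncation mass $\tilde J\le\eta T$, bound the logarithmic gain below by $(e^{-\rho T}/\eta)\tilde J$ via the tangent inequality, bound the quadratic cost above by $\tilde J\bigl(2cK_2(x_0)+\mathcal O(\eta T)\bigr)$ using $\eqref{eq: stima x massimizz}$, and finally choose $\eta$ of exponential type $e^{-L(x_0)T}$ so that the bracket is nonnegative for all $T\ge1$. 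The only cosmetic differences are that you factor $x^2-y^2=(x-y)(x+y)$ directly (giving a marginally sharper coefficient, $c/\rho$ instead of $2c/\rho$, on the $\eta T$ term) whereas the paper uses the convexity inequality $x^2-y^2\le 2x(x-y)$ and splits the resulting integral, and you allow an extra multiplicative constant $c_0(x_0)$ in $\eta$; neither changes the argument.
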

\begin{proof}
Fix $x_{0}$ and $u$ as in the hypothesis, and set $x:=x\left(\cdot;x_{0},u\right)$.
In order to define the function $\eta$, we preliminarily observe
that there obviously exits a number $L\left(x_{0}\right)>\rho$ such
that
\begin{equation}
e^{L\left(x_{0}\right)-\rho}-2c\rho^{-1}e^{-L\left(x_{0}\right)}\geq2cK_{2}\left(x_{0}\right).\label{eq: cond 1 L(x_0)}
\end{equation}
A simple computation shows that the function $T\to e^{\left(L\left(x_{0}\right)-\rho\right)T}-2c\rho^{-1}Te^{-L\left(x_{0}\right)T}$
is increasing if 
\begin{equation}
L\left(x_{0}\right)>\rho+\frac{2c}{\rho}.\label{eq: cond 2 L(x_0)}
\end{equation}
Now we now choose $L\left(x_{0}\right)$ satisfying $\eqref{eq: cond 1 L(x_0)}$
and $\eqref{eq: cond 2 L(x_0)}$ and we define 
\[
\eta\left(x_{0},T\right):=e^{-L\left(x_{0}\right)T}.
\]
Relation $i)$ follows from the fact that $N\left(x_{0},T\right)>1$;
moreover we have: 
\begin{equation}
e^{\left(L\left(x_{0}\right)-\rho\right)T}-2c\rho^{-1}Te^{-L\left(x_{0}\right)T}-2cK_{2}\left(x_{0}\right)\geq0\quad\forall T\geq1.\label{eq: per Bu_t meglio}
\end{equation}

Now fix $T\geq1$ and take $\tilde{u}^{T}$ as in Lemma $\ref{lem: loc alto}$.
Define $u^{T}:=\tilde{u}^{T}$ if $\tilde{u}^{T}\geq\eta\left(x_{0},T\right)$
almost everywhere in $\left[0,T\right]$, and 
\[
u^{T}:=\left(\tilde{u}^{T}\vee\eta\left(x_{0},T\right)\right)\chi_{\left[0,T\right]}+\tilde{u}^{T}\chi_{\left(T,+\infty\right)}
\]
if there exists a subset of $\left[0,T\right]$ of positive measure
where $\tilde{u}^{T}<\eta\left(x_{0},T\right)$. In this case define
also
\[
I:=\int_{0}^{T}\left[\tilde{u}^{T}\left(s\right)\vee\eta-\tilde{u}^{T}\left(s\right)\right]\mbox{d}s.
\]

We show that
\[
\mathcal{B}\left(x_{0};u^{T}\right)-\mathcal{B}\left(x_{0};\tilde{u}^{T}\right)\geq0,
\]
and the conclusion will follow from Lemma $\ref{lem: loc alto}$.

We provide two different estimates of the quantity $x\left(\cdot;x_{0},u_{T}\right)-x\left(\cdot;x_{0},\tilde{u}_{T}\right)$.
Set $x_{T}=x\left(\cdot;x_{0},u_{T}\right)$, $\tilde{x}_{T}=x\left(\cdot;x_{0},\tilde{u}_{T}\right)$,
$h=h\left(x_{T},\tilde{x}_{T}\right)$, $\eta=\eta\left(x_{0},T\right)$
and $N=N\left(x_{0},T\right)$ for simplicity of notation. Remembering
that $h\leq0$, we have, for every $t\in\left[0,T\right]$: 
\begin{eqnarray*}
x_{T}\left(t\right)-\tilde{x}_{T}\left(t\right) & = & \int_{0}^{t}e^{\int_{s}^{t}h\mbox{d}\tau}\left[u^{T}\left(s\right)-\tilde{u}^{T}\left(s\right)\right]\mbox{d}s\\
 & \leq & \int_{0}^{T}e^{\int_{s}^{t}h\mbox{d}\tau}\left[\tilde{u}^{T}\left(s\right)\vee\eta-\tilde{u}^{T}\left(s\right)\right]\mbox{d}s\\
 & \leq & I.
\end{eqnarray*}
The same estimate holds for $t>T$, since $u^{T}=\tilde{u}^{T}$ in
$\left(T,+\infty\right)$. Hence:
\begin{equation}
x_{T}-\tilde{x}_{T}\leq I\quad\mbox{in }\left[0,+\infty\right).\label{eq: stima x_T tilde - x}
\end{equation}

Moreover, since $\eta>0$:
\begin{eqnarray*}
I & = & \int_{0}^{T}\left[\tilde{u}^{T}\left(s\right)\vee\eta-\tilde{u}^{T}\left(s\right)\right]\mbox{d}s\\
 & = & \int_{\left[0,T\right]\cap\left\{ \tilde{u}^{T}\leq\eta\right\} }\left[\eta-\tilde{u}^{T}\left(s\right)\right]\mbox{d}s\\
 & \leq & T\eta.
\end{eqnarray*}

Hence
\begin{equation}
x_{T}-\tilde{x}_{T}\leq T\eta\quad\mbox{in }\left[0,+\infty\right).\label{eq: stima x_T tilde - x_T}
\end{equation}
By $\eqref{eq: stima x_T tilde - x}$ and $\eqref{eq: stima x_T tilde - x_T}$,
using the convexity relation $x^{2}-y^{2}\leq2x\left(x-y\right)$,
we obtain:
\begin{eqnarray*}
c\int_{0}^{+\infty}e^{-\rho t}\left[x_{T}^{2}\left(t\right)-\tilde{x}_{T}^{2}\left(t\right)\right]\mbox{d}t & \leq & 2c\int_{0}^{+\infty}e^{-\rho t}x_{T}\left(t\right)\left[x_{T}\left(t\right)-\tilde{x}_{T}\left(t\right)\right]\mbox{d}t\\
 & \leq & 2cI\int_{0}^{+\infty}e^{-\rho t}x_{T}\left(t\right)\mbox{d}t\\
 & = & 2cI\int_{0}^{+\infty}e^{-\rho t}\left[x_{T}\left(t\right)-\tilde{x}_{T}\left(t\right)\right]\mbox{d}t\\
 &  & +2cI\int_{0}^{+\infty}e^{-\rho t}\tilde{x}_{T}\left(t\right)\mbox{d}t\\
 & \leq & 2cIT\eta\int_{0}^{+\infty}e^{-\rho t}\mbox{d}t+2cI\int_{0}^{+\infty}e^{-\rho t}x\left(t\right)\mbox{d}t\\
 & \leq & I\left(2\frac{c}{\rho}T\eta+2cK_{2}\left(x_{0}\right)\right),
\end{eqnarray*}
where we also used $\eqref{eq: orbita alto meno di orig}$ and $\eqref{eq: stima x massimizz}$
(the trajectory $x\left(\cdot\right)$ is associated with a control
in a maximizing sequence).

Moreover:
\begin{eqnarray*}
\int_{0}^{+\infty}e^{-\rho t}\left(\log u^{T}\left(t\right)-\log\tilde{u}^{T}\left(t\right)\right)\mbox{d}t & = & \int_{0}^{T}e^{-\rho t}\left(\log\left(\tilde{u}^{T}\left(t\right)\vee\eta\right)-\log\tilde{u}^{T}\left(t\right)\right)\mbox{d}t\\
 & \geq & \int_{0}^{T}e^{-\rho t}\frac{1}{\tilde{u}^{T}\left(t\right)\vee\eta}\left(\tilde{u}^{T}\left(t\right)\vee\eta-\tilde{u}^{T}\left(t\right)\right)\mbox{d}t\\
 & = & \frac{1}{\eta}\int_{0}^{T}e^{-\rho t}\left(\tilde{u}^{T}\left(t\right)\vee\eta-\tilde{u}^{T}\left(t\right)\right)\mbox{d}t\\
 & \geq & \frac{e^{-\rho T}}{\eta}I.
\end{eqnarray*}

Joining the last two estimates leads to:
\begin{eqnarray*}
\mathcal{B}\left(x_{0};u^{T}\right)-\mathcal{B}\left(x_{0};\tilde{u}^{T}\right) & = & \int_{0}^{+\infty}e^{-\rho t}\left(\log u^{T}\left(t\right)-\log\tilde{u}^{T}\left(t\right)\right)\mbox{d}t\\
 &  & -c\int_{0}^{+\infty}e^{-\rho t}\left[x_{T}^{2}\left(t\right)-\tilde{x}_{T}^{2}\left(t\right)\right]\mbox{d}t\\
 & \geq & I\left(\frac{e^{-\rho T}}{\eta\left(x_{0},T\right)}-2\frac{c}{\rho}T\eta\left(x_{0},T\right)-2cK_{2}\left(x_{0}\right)\right)\\
\\
 & = & I\left(e^{\left(L\left(x_{0}\right)-\rho\right)T}-2c\rho^{-1}Te^{-L\left(x_{0}\right)T}-2cK_{2}\left(x_{0}\right)\right)\\
 & \geq & 0,
\end{eqnarray*}
where the last inequality holds by $\eqref{eq: per Bu_t meglio}$.
\end{proof}

\section{Diagonal procedures and functional convergence}

From this point on, the initial state $x_{0}\geq0$ is to be considered
fixed.
\begin{lemma}
\label{prop: new seq1}There exists a sequence $\left(v_{n}\right)_{n\in\mathbb{N}}$
and a function $v$ in $\Lambda\left(x_{0}\right)$ such that:
\begin{align}
 & \lim_{n\to+\infty}\mathcal{B}\left(x_{0};v_{n}\right)=V\left(x_{0}\right)\label{eq: new1 massimizz}\\
 & v_{n}\rightharpoonup v\mbox{ in }L^{1}\left(\left[0,T\right]\right)\quad\forall T>0\label{eq: new1 conv deb}\\
 & \forall T\in\mathbb{N}:\mbox{almost everywhere in}\left[0,T\right]:\nonumber \\
 & \forall n\geq T:\eta\left(x_{0},T\right)\leq v,v_{n}\leq N\left(x_{0},T\right)\label{eq: new1 bound}
\end{align}
where $N$, $\eta$ are the functions defined in Lemmas $\ref{lem: loc alto}$
and $\ref{lem: loc basso}$ .\end{lemma}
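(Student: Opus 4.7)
The plan is to build $(v_n)_n$ and $v$ by iterating the two localization lemmas along an increasing sequence of intervals and then merging the local weak limits through a diagonal extraction which is made coherent by the monotonicity of the bound functions $N$ and $\eta$. First, I would fix any maximizing sequence $(u_n^0)_n\subseteq\Lambda(x_0)$ and construct inductively, for every integer $T\geq1$, two sequences $(u_n^T)_n$ and $(\bar{u}_n^T)_n$: assuming $(\bar{u}_n^{T-1})_n$ is already available and maximizing (with the convention $\bar{u}_n^0=u_n^0$), I apply Lemma \ref{lem: loc alto} and Lemma \ref{lem: loc basso} with parameter $T$ to each $\bar{u}_n^{T-1}$, producing a maximizing $(u_n^T)_n$ with $\eta(x_0,T)\leq u_n^T\leq N(x_0,T)$ a.e.\ on $[0,T]$; since this sequence is bounded in $L^{\infty}([0,T])\subseteq L^{2}([0,T])$, I extract by reflexivity a subsequence $(\bar{u}_n^T)_n=(u_{\sigma_T(n)}^T)_n$ converging weakly in $L^{1}([0,T])$ to some $v^T$ still satisfying $\eta(x_0,T)\leq v^T\leq N(x_0,T)$ a.e.

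The decisive observation, which addresses the ``two families'' difficulty announced in the introduction, is that the monotonicity of $N(x_0,\cdot)$ and $\eta(x_0,\cdot)$ forces the localization step at level $T+1$ to act trivially on $[0,T]$. Indeed one has
\[
\eta(x_0,T+1)<\eta(x_0,T)\leq\bar{u}_n^T\leq N(x_0,T)<N(x_0,T+1)\quad\text{a.e.\ on }[0,T],
\]
so the truncation $(\cdot\wedge N(x_0,T+1))\vee\eta(x_0,T+1)$ coming from the next application of the lemmas leaves $\bar{u}_n^T$ unchanged on $[0,T]$, that is $u_n^{T+1}=\bar{u}_n^T$ a.e.\ on $[0,T]$ for every $n$. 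Consequently the restriction of $(\bar{u}_n^{T+1})_n$ to $[0,T]$ is a subsequence of the restriction of $(\bar{u}_n^T)_n$ to $[0,T]$, the two weak limits coincide, $v^{T+1}=v^T$ a.e.\ on $[0,T]$, and the family $\{v^T\}_T$ glues into a single function $v$ on $[0,+\infty)$; since $v\geq\eta(x_0,T)>0$ and $v\leq N(x_0,T)$ on each $[0,T]$, one has $v\in\Lambda(x_0)$.

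Then I would set $v_n:=\bar{u}_n^n$ and check the three conclusions. Unwinding the identity $u_k^{T'}=\bar{u}_k^{T'-1}$ on $[0,T]$ along $T'=n,n-1,\ldots,T+1$ produces, for every $n\geq T$, an index $m_n\geq n$ with $v_n=\bar{u}_{m_n}^T$ a.e.\ on $[0,T]$ (set $m_T:=T$). This gives \eqref{eq: new1 bound} at once, and, since $m_n\to+\infty$, it turns $(v_n)_{n\geq T}$ on $[0,T]$ into a reindexing of $(\bar{u}_m^T)_m$ with index tending to infinity, yielding \eqref{eq: new1 conv deb}. For \eqref{eq: new1 massimizz}, each localization step only increases the functional, $\mathcal{B}(x_0;u_n^T)\geq\mathcal{B}(x_0;\bar{u}_n^{T-1})$; chaining these inequalities one obtains $\mathcal{B}(x_0;v_n)\geq\mathcal{B}(x_0;u_{\kappa(n)}^0)$ with $\kappa(n)=(\sigma_1\circ\cdots\circ\sigma_n)(n)\geq n$, and since $(u_n^0)_n$ is maximizing and $\kappa(n)\to+\infty$, the conclusion follows by squeezing against the upper bound $V(x_0)$.

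The main obstacle, as flagged in the introduction, is the ``two families'' entanglement: the sequence $(u_n^{T+1})_n$ is \emph{not} a subsequence of $(\bar{u}_n^T)_n$, because the localization lemmas also modify the controls on a tail interval of the form $(T+1,T+1+\beta_{T+1}]$. The resolution---and the key technical point to state carefully---is that although these outside modifications can be wild, the monotonicity of $N$ and $\eta$ makes the passage from level $T$ to level $T+1$ a genuine subsequence extraction \emph{when restricted to $[0,T]$}; everything else in the construction is then bookkeeping.
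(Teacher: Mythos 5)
Your proposal is correct and follows essentially the same route as the paper's proof: the same inductive two-family construction via the localization lemmas and weak compactness on $[0,T]$, the same key monotonicity observation that the level-$T{+}1$ truncation acts as the identity on $[0,T]$ (so that $(\bar u_n^{T+1})_n$ restricted to $[0,T]$ is genuinely a subsequence of $(\bar u_n^T)_n$), the same diagonal $v_n:=\bar u_n^n$, and the same chaining of inequalities to recover the maximizing property. The only cosmetic differences are that you extract the weak limit via $L^2$ reflexivity rather than Dunford--Pettis, and you justify the pointwise bound on $v$ by weak closedness of the order interval rather than by the $\liminf$/$\limsup$ property of weak limits used in the paper; both are valid and equivalent here.
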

\begin{proof}
Set $\mathcal{B}=\mathcal{B}\left(x_{0};\cdot\right)$ and fix $\left(u_{n}\right)_{n\in\mathbb{N}}$
and such that
\[
\lim_{n\to+\infty}\mathcal{B}\left(u_{n}\right)=V\left(x_{0}\right).
\]

Set, for every $n\in\mathbb{N}$, $u_{n}^{1}$ as the function obtained
by applying Lemma $\ref{lem: loc basso}$ to $u_{n}$, for $T=1$.
Then
\begin{align*}
 & u_{n}^{1}=\left(u_{n}\wedge N\left(x_{0},1\right)\right)\vee\eta\left(x_{0},1\right)\quad\mbox{a.e. in }\left[0,1\right]\\
 & \mathcal{B}\left(u_{n}^{1}\right)\geq\mathcal{B}\left(u_{n}\right).
\end{align*}
Hence, as a consequence of the Dunford-Pettis criterion, there exists
a subsequence $\left(\overline{u}_{n}^{1}\right)_{n}$ of $\left(u_{n}^{1}\right)_{n}$
and a function $u^{1}\in L^{1}\left(\left[0,1\right]\right)$ such
that
\[
\overline{u}_{n}^{1}\rightharpoonup u^{1}\mbox{ in }L^{1}\left(\left[0,1\right]\right).
\]
Now apply Lemma $\ref{lem: loc basso}$ to the elements of the sequence
$\left(\overline{u}_{n}^{1}\right)_{n}$ in order to obtain a sequence
$\left(u_{n}^{2}\right)_{n}$ satisfying, for every $n\in\mathbb{N}$:
\begin{align*}
 & u_{n}^{2}=\left(\overline{u}_{n}^{1}\wedge N\left(x_{0},2\right)\right)\vee\eta\left(x_{0},2\right)\quad\mbox{a.e. in }\left[0,2\right]\\
 & \mathcal{B}\left(u_{n}^{2}\right)\geq\mathcal{B}\left(\overline{u}_{n}^{1}\right).
\end{align*}
Take, again by Dunford-Pettis, $\left(\overline{u}_{n}^{2}\right)_{n}$
extracted from $\left(u_{n}^{2}\right)_{n}$ and a function $u^{2}\in L^{1}\left(\left[0,2\right]\right)$
such that
\[
\overline{u}_{n}^{2}\rightharpoonup u^{2}\mbox{ in }L^{1}\left(\left[0,2\right]\right).
\]
Iterating this process we define families $\left(\overline{u}_{n}^{T}\right)_{n}$,
$\left(u_{n}^{T}\right)_{n}$, $\sigma_{T}$ ($T\in\mathbb{N}$) such
that the $\sigma_{T}$'s are strictly increasing with $\sigma_{T}\geq Id$
, satisfying for every $T,n\in\mathbb{N}$:
\begin{align}
 & \overline{u}_{n}^{T}=u_{\sigma_{T}\left(n\right)}^{T}\label{eq: provv subseq}\\
 & u_{n}^{T}=\left(\overline{u}_{n}^{T-1}\wedge N\left(x_{0},T\right)\right)\vee\eta\left(x_{0},T\right)\quad\mbox{a.e. in }\left[0,T\right]\label{eq: provv bound}\\
 & \mathcal{B}\left(u_{n}^{T}\right)\geq\mathcal{B}\left(\overline{u}_{n}^{T-1}\right)\label{eq: provv meglio}\\
 & \overline{u}_{n}^{T}\rightharpoonup u^{T}\mbox{ in }L^{1}\left(\left[0,T\right]\right).\label{eq: provv conv}
\end{align}
Fix $T\in\mathbb{N}$. The sequence $\left(\overline{u}{}_{n}^{T}\right)_{n}$
coincides, almost everywhere in $\left[0,T-1\right]$, with a sequence
that is extracted from $\left(\overline{u}{}_{n}^{T-1}\right)_{n}$.
Indeed, for every $n\in\mathbb{N}$:
\begin{eqnarray*}
\bar{u}_{n}^{T} & = & u_{\sigma_{T}\left(n\right)}^{T}\overset{{\scriptstyle a.e.\, in\,}{\scriptscriptstyle \left[0,T\right]}}{=}\left(\overline{u}_{\sigma_{T}\left(n\right)}^{T-1}\wedge N\left(x_{0},T\right)\right)\vee\eta\left(x_{0},T\right)\\
 & \overset{{\scriptstyle a.e.\, in\,}{\scriptscriptstyle \left[0,T-1\right]}}{=} & \overline{u}_{\sigma_{T}\left(n\right)}^{T-1}.
\end{eqnarray*}
The last equality holds since applying recursively (in $T$) relation
$\eqref{eq: provv bound}$ together with relation $\eqref{eq: provv subseq}$
gives $\overline{u}_{\sigma_{T}\left(n\right)}^{T-1}\in\left[\eta\left(x_{0},T-1\right),N\left(x_{0},T-1\right)\right]$;
then observe that by Lemmas $\ref{lem: loc alto}$ and $\ref{lem: loc basso}$
the function $\eta\left(x_{0},\cdot\right)$ is decreasing and the
function $N\left(x_{0},\cdot\right)$ is increasing.

Hence $u^{T-1}=u^{T}$ almost everywhere in $\left[0,T-1\right]$,
by the essential uniqueness of the weak limit.

Hence, defining 
\[
\forall t\geq0:v\left(t\right):=u^{\left[t\right]+1}\left(t\right)
\]
we obtain $v=u^{T}$ almost everywhere in $\left[0,T\right]$ and
\begin{equation}
\forall T\in\mathbb{N}:\overline{u}_{n}^{T}\rightharpoonup v\quad\mbox{in }L^{1}\left[0,T\right].\label{eq: provv conv 2}
\end{equation}

Repeating the previous argument, we see that for every $T,n\in\mathbb{N}$:
\begin{eqnarray*}
\bar{u}_{n}^{T} & \overset{{\scriptstyle a.e.\, in\,}{\scriptscriptstyle \left[0,T-1\right]}}{=} & \overline{u}_{\sigma_{T}\left(n\right)}^{T-1}\\
 & \overset{{\scriptstyle a.e.\, in\,}{\scriptscriptstyle \left[0,T-2\right]}}{=} & \overline{u}_{\sigma_{T-1}\circ\sigma_{T}\left(n\right)}^{T-2}\\
 & \dots\\
 & \overset{{\scriptstyle a.e.\, in\,}{\scriptscriptstyle \left[0,T-j\right]}}{=} & \overline{u}_{\sigma_{T-j+1}\circ\dots\circ\sigma_{T}\left(n\right)}^{T-j}.
\end{eqnarray*}
Observe that $\left(\overline{u}_{\sigma_{T-j+1}\circ\dots\circ\sigma_{T}\left(n\right)}^{T-j}\right)_{n}$
is a subsequence of $\left(\overline{u}_{n}^{T-j}\right)_{n}$ since
the composition $\sigma_{T-j+1}\circ\dots\circ\sigma_{T}$ is strictly
increasing and satisfies
\[
\sigma_{T-j+1}\circ\dots\circ\sigma_{T}\left(n\right)\geq n\quad\forall n\in\mathbb{N}.
\]
Hence, inverting the quantifiers ``$\forall n\in\mathbb{N}$'' and
``a.e. in $\left[0,T-j\right]$'', we see that $\left(\overline{u}_{n}^{T}\right)_{n}$
coincides, almost everywhere in $\left[0,T-j\right]$ with a subsequence
of $\left(\overline{u}_{n}^{T-j}\right)_{n}$, for every $T\in\mathbb{N}$
and $j=1,\dots,T-1$.

This implies that for every $T\in\mathbb{N}$ the sequence $\left(v_{n}\right)_{n\geq T}$
defined by $v_{n}:=\overline{u}_{n}^{n}$ coincides with a subsequence
of $\left(\overline{u}_{n}^{T}\right)_{n\geq1}$, almost everywhere
in $\left[0,T\right]$. Hence
\begin{align}
 & \forall T\in\mathbb{N}:\mbox{almost everywhere in }\left[0,T\right]:\nonumber \\
 & \forall n\geq T:\eta\left(x_{0},T\right)\leq v_{n}\leq N\left(x_{0},T\right).\label{eq: seq new1 bound}
\end{align}

and
\begin{align*}
 & v_{n}\rightharpoonup v\mbox{ in }L^{1}\left(\left[0,T\right]\right)\quad\forall T\in\mathbb{N},
\end{align*}
by $\eqref{eq: provv bound}$ and $\eqref{eq: provv conv 2}$.

The extension to every $T>0$ is straightforward, so we obtain $\eqref{eq: new1 conv deb}$.
Now fix $T>0$; a well known property of the weak convergence implies
that
\begin{equation}
\liminf_{n\to+\infty}v_{n}\left(t\right)\leq v\left(t\right)\leq\limsup_{n\to+\infty}v_{n}\left(t\right)\mbox{ for almost every }t\in\left[0,T\right].\label{eq: propr conv deb}
\end{equation}
Considering the intersection between the subsets of $\left[0,T\right]$
where relations $\eqref{eq: seq new1 bound}$ and $\eqref{eq: propr conv deb}$
hold, we obtain $\eqref{eq: new1 bound}$.

In order to prove $\eqref{eq: new1 massimizz}$, observe that
\begin{eqnarray*}
\mathcal{B}\left(v_{n}\right) & = & \mathcal{B}\left(u_{\sigma_{n}\left(n\right)}^{n}\right)\geq\mathcal{B}\left(\overline{u}_{\sigma_{n}\left(n\right)}^{n-1}\right)\\
 & = & \mathcal{B}\left(u_{\sigma_{n-1}\circ\sigma_{n}\left(n\right)}^{n-1}\right)\geq\dots\geq\mathcal{B}\left(u_{\sigma_{n-2}\circ\sigma_{n-1}\circ\sigma_{n}\left(n\right)}^{n-2}\right)\\
 & \geq & \dots\geq\mathcal{B}\left(u_{\sigma_{1}\circ\dots\circ\sigma_{n}\left(n\right)}^{1}\right)\geq\mathcal{B}\left(u_{\sigma_{1}\circ\dots\circ\sigma_{n}\left(n\right)}\right).
\end{eqnarray*}
Fix $\epsilon>0$ and $n_{\epsilon}\in\mathbb{N}$ such that $V\left(x_{0}\right)-\mathcal{B}\left(u_{n}\right)<\epsilon$
for $n\geq n_{\epsilon}$; since $\sigma_{1}\circ\dots\circ\sigma_{m}\geq Id$,
we have
\[
V\left(x_{0}\right)-\mathcal{B}\left(v_{n}\right)<\epsilon\quad\forall n\geq n_{\epsilon}.
\]
\end{proof}
\begin{proposition}
\label{prop: conv punt orbits}Let $v_{n}$ ($n\in\mathbb{N}$) and
$v$ be as in Proposition $\ref{prop: new seq1}$, and let $x_{n}:=x\left(\cdot;x_{0},v_{n}\right)$
and $x:=x\left(\cdot;x_{0},v\right)$ be the associated trajectories
starting at $x_{0}$. Then
\[
x_{n}\to x\quad\mbox{pointwise in }\left[0,+\infty\right).
\]
\end{proposition}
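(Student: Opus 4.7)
The plan is to apply Arzelà--Ascoli on every compact time interval, exploiting the uniform local $L^\infty$ bound on $v_n$ provided by Lemma~\ref{prop: new seq1}, and then identify the limit of any convergent subsequence of states via the integral form of $\eqref{eq: eq stato}$ combined with the weak $L^1$ convergence $v_n \rightharpoonup v$.

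First I would fix $T \in \mathbb{N}$ and restrict attention to $n \geq T$. By $\eqref{eq: new1 bound}$, $v_n \leq N(x_0,T)$ almost everywhere on $[0,T]$, so the upper comparison $\eqref{eq: comparison per singola}$ yields a uniform bound $x_n(t) \leq C(x_0,T)$ for $t \in [0,T]$. Since $F \in \mathcal{C}^1$, the values $F(x_n(t))$ are uniformly bounded on $[0,T]$ as well, and the state equation gives
\[
|\dot{x}_n(t)| \leq |F(x_n(t))| + v_n(t) \leq C(x_0,T) + N(x_0,T),
\]
so $(x_n)_{n \geq T}$ is equi-Lipschitz on $[0,T]$ (and uniformly bounded). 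Arzel\`a--Ascoli then guarantees that from any subsequence of $(x_n)$ one can extract a further subsequence converging uniformly on $[0,T]$ to some continuous $\bar{x}$.

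Next I would pass to the limit in the integral form
\[
x_{n}(t) = x_0 + \int_0^t F(x_n(s))\,\mathrm{d}s + \int_0^t v_n(s)\,\mathrm{d}s, \quad t \in [0,T],
\]
along the chosen subsequence. The first integral converges by uniform convergence of $x_n$ and continuity of $F$; the second converges because $\chi_{[0,t]} \in L^\infty([0,T])$ and $v_n \rightharpoonup v$ in $L^1([0,T])$ by Lemma~\ref{prop: new seq1}. Therefore $\bar{x}$ satisfies the integral equation with control $v$ and initial datum $x_0$, and uniqueness for $\eqref{eq: eq stato}$ (the dynamics $F$ is globally Lipschitz) forces $\bar{x} \equiv x$ on $[0,T]$. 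By the standard subsequence principle, the full sequence $x_n$ converges uniformly on $[0,T]$ to $x$; since $T$ is arbitrary, pointwise convergence on $[0,+\infty)$ follows.

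The main obstacle is really the one already resolved upstream: without the local $L^\infty$ bound on $v_n$ coming from the uniform localization lemmas (via Lemma~\ref{prop: new seq1}), one would only have weak $L^1$ control on $v_n$, and extracting a limit of the nonlinear term $F(x_n)$ would be delicate. Once that bound is available, the equi-Lipschitz character of $(x_n)$ makes the Arzel\`a--Ascoli step and the subsequent identification of the limit routine.
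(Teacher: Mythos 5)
Your proof is correct but follows a genuinely different route from the paper's. The paper argues \emph{directly}: it subtracts the state equations, writes $\dot x_n - \dot x = h_n\,(x_n-x) + (v_n-v)$ with $h_n = h(x_n,x)$ from Remark~\ref{remark funzione h}, bounds $|h_n|\,|x_n-x|$ by the fixed integrable function $b_0\,x(\cdot;x_0,N(x_0,T))$ on $[0,T]$, passes to the $\limsup$ using the weak convergence $v_n \rightharpoonup v$ for the control term and the reverse Fatou lemma of the Appendix for the $h_n$ term, and finishes with Gronwall's inequality applied to $\phi(t)=\limsup_n |x_n(t)-x(t)|$. You instead run a compactness-plus-identification scheme: Arzel\`a--Ascoli from the equi-Lipschitz bound, passage to the limit in the integral form (uniform convergence handles $F(x_{n_k})$, weak $L^1$ convergence handles the control), uniqueness of solutions of $\eqref{eq: eq stato}$ to identify the limit as $x$, and finally the sub-subsequence principle to upgrade to convergence of the whole sequence. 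Both arguments rely on the same two inputs — the local $L^\infty$ bound $v_n \le N(x_0,T)$ for $n\ge T$ on $[0,T]$ and $v_n \rightharpoonup v$ in $L^1([0,T])$ — and both deliver locally uniform (not just pointwise) convergence. The paper's version is self-contained once the comparison machinery of Remark~\ref{remark funzione h} is in place and avoids invoking Arzel\`a--Ascoli or the subsequence principle; yours is perhaps more standard and modular, and makes the uniqueness-of-solution mechanism explicit. One small point of care in your write-up: the bound $|\dot x_n| \le C(x_0,T) + N(x_0,T)$ should really read $\max_{[0,C(x_0,T)]}|F| + N(x_0,T)$, and the derivative bound holds only a.e., but since the $x_n$ are absolutely continuous an a.e. bound on $\dot x_n$ still yields the Lipschitz constant, so the Arzel\`a--Ascoli step goes through.
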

\begin{proof}
Fix $T>0$. By $\eqref{eq: new1 bound}$ in Proposition $\ref{prop: new seq1}$
and by Remark $\ref{remark comp ODE}$, $v$ is admissible and the
following uniform estimate holds: 
\begin{equation}
\left|x-x_{n}\right|\leq x\left(\cdot;x_{0},N\left(x_{0},T\right)\right)\quad\mbox{in }\left[0,T\right],\,\forall n\in\mathbb{N}.\label{eq: stima orbit unif}
\end{equation}

Now fix $t\in\left[0,T\right]$ and $n\in\mathbb{N}$. Subtracting
the state equation for $x$ from the state equation for $x_{n}$,
we obtain, for every $s\in\left[0,t\right]$:
\begin{eqnarray*}
\dot{x_{n}}\left(s\right)-\dot{x}\left(s\right) & = & F\left(x_{n}\left(s\right)\right)-F\left(x\left(s\right)\right)+v_{n}\left(s\right)-v\left(s\right)\\
 & = & h_{n}\left(s\right)\left[x_{n}\left(s\right)-x\left(s\right)\right]+v_{n}\left(s\right)-v\left(s\right),
\end{eqnarray*}
where $h_{n}:=h\left(x_{n},x\right)$ is the function defined in Remark
$\ref{remark funzione h}$.

Integrating both sides of this equation between $0$ and $t$, then
taking absolute values leads to:
\begin{eqnarray}
\left|x_{n}\left(t\right)-x\left(t\right)\right| & \leq\int_{0}^{t}\left|h_{n}\left(s\right)\right|\left|x_{n}\left(s\right)-x\left(s\right)\right|\mbox{d}s & +\left|\int_{0}^{t}\left[v_{n}\left(s\right)-v\left(s\right)\right]\mbox{d}s\right|.\label{eq: per conv qo orbite}
\end{eqnarray}
Observe that, for every $s\in\left[0,t\right]$:
\begin{eqnarray*}
\left|h_{n}\left(s\right)\right|\left|x_{n}\left(s\right)-x\left(s\right)\right| & \leq & b_{0}x\left(s;x_{0},N\left(x_{0},T\right)\right),
\end{eqnarray*}
by Remark $\ref{remark funzione h}$ and by $\eqref{eq: stima orbit unif}$.

Since the function on the right hand side obviously belongs to $L^{1}\left(\left[0,t\right]\right)$,
passing to the limsup in $\eqref{eq: per conv qo orbite}$ and remembering
$\eqref{eq: new1 conv deb}$, we obtain by Dominated Convergence:
\begin{eqnarray}
\limsup_{n\to+\infty}\left|x_{n}\left(t\right)-x\left(t\right)\right| & \leq & \limsup_{n\to+\infty}\int_{0}^{t}\left|h_{n}\left(s\right)\right|\left|x_{n}\left(s\right)-x\left(s\right)\right|\mbox{d}s\nonumber \\
 & = & \int_{0}^{t}\limsup_{n\to+\infty}\left|h_{n}\left(s\right)\right|\left|x_{n}\left(s\right)-x\left(s\right)\right|\mbox{d}s\label{eq: conv orbits 1}\\
 & \leq & b_{0}\int_{0}^{t}\limsup_{n\to+\infty}\left|x_{n}\left(s\right)-x\left(s\right)\right|\mbox{d}s.\nonumber 
\end{eqnarray}
Hence by Gronwall's inequality:
\[
\limsup_{n\to+\infty}\left|x_{n}\left(t\right)-x\left(t\right)\right|=0,
\]
for every $t\in\left[0,T\right]$. This is equivalent to

\[
\lim_{n\to+\infty}x_{n}=x\quad\mbox{in }\left[0,T\right],
\]

which proves the thesis, since $T>0$ is generic.
\end{proof}
$ $
\begin{lemma}
\label{lem: new seq 2} Take $\left(v_{n}\right)_{n\in\mathbb{N}}$
and $v$ as in Lemma $\ref{prop: new seq1}$. There exists a sequence\\
$\left(v_{n,n}\right)_{n\in\mathbb{N}}$, extracted from $\left(v_{n}\right)_{n\in\mathbb{N}}$,
and a function $u_{*}\in\Lambda\left(x_{0}\right)$, satisfying, for
every $T>0$:

\begin{align}
 & \log v_{n,n}\rightharpoonup\log u_{*}\quad\mbox{in }L^{1}\left(\left[0,T\right]\right)\label{eq: new2 conv deb}\\
 & \eta\left(x_{0},T\right)\leq u_{*}\leq N\left(x_{0},T\right)\quad\mbox{ a. e. in }\left[0,T\right].\label{eq: u_star bound}\\
 & 0\leq x\left(\cdot;x_{0},u_{*}\right)\leq x\left(\cdot;x_{0},v\right)\quad\mbox{ in}\left[0,+\infty\right).\label{eq: x_star minore x}
\end{align}
\end{lemma}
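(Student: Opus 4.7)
The plan is to run a standard diagonal extraction on the sequence $(\log v_n)_n$. By the bounds in \eqref{eq: new1 bound}, for every $T\in\mathbb{N}$ and every $n\geq T$
\[
\left|\log v_n(t)\right|\leq \max\!\left\{-\log\eta(x_0,T),\ \log N(x_0,T)\right\}\quad\mbox{a.e. in }[0,T],
\]
so $(\log v_n)_{n\geq T}$ is uniformly bounded in $L^\infty([0,T])$, hence weakly relatively compact in $L^1([0,T])$ by Dunford--Pettis. Iterating on $T=1,2,\ldots$ and passing to nested subsequences, then taking the diagonal $v_{n,n}$, I obtain $\log v_{n,n}\rightharpoonup g_T$ in $L^1([0,T])$ for every $T\in\mathbb{N}$; by essential uniqueness of the weak limit the $g_T$ glue into a single measurable function $g$ on $[0,+\infty)$, and setting $u_*:=e^g$ makes \eqref{eq: new2 conv deb} hold by construction.

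The bound \eqref{eq: u_star bound} follows by transferring the pointwise estimates $\log\eta(x_0,T)\leq \log v_{n,n}\leq \log N(x_0,T)$ to the weak $L^1$-limit $g$: any linear weak limit of a two-sided a.e.\ bounded sequence retains those bounds a.e., giving $\eta(x_0,T)\leq u_*\leq N(x_0,T)$ a.e.\ in $[0,T]$. In particular $u_*\in L^\infty_{loc}([0,+\infty))$ is strictly positive, so $u_*\in\Lambda(x_0)$.

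For \eqref{eq: x_star minore x} I reduce the claim to the a.e.\ inequality $u_*\leq v$ and invoke Remark \ref{remark comp ODE} (non-negativity of $x(\cdot;x_0,u_*)$ being automatic from admissibility). To obtain $u_*\leq v$ I exploit the concavity of $\log$: for any $T>0$ and any measurable $E\subseteq[0,T]$ with $|E|>0$, Jensen's inequality gives
\[
\frac{1}{|E|}\int_E\log v_{n,n}(s)\,\mbox{d}s\leq \log\!\left(\frac{1}{|E|}\int_E v_{n,n}(s)\,\mbox{d}s\right).
\]
Since $(v_{n,n})$ is a subsequence of $(v_n)$, from \eqref{eq: new1 conv deb} we still have $v_{n,n}\rightharpoonup v$ in $L^1([0,T])$, while $\log v_{n,n}\rightharpoonup \log u_*$ in $L^1([0,T])$ by construction; testing both convergences against $\chi_E\in L^\infty([0,T])$ and using continuity of $\log$ at strictly positive arguments (guaranteed by \eqref{eq: new1 bound}) yields
\[
\frac{1}{|E|}\int_E\log u_*(s)\,\mbox{d}s\leq \log\!\left(\frac{1}{|E|}\int_E v(s)\,\mbox{d}s\right).
\]
Specializing $E$ to shrinking intervals around a common Lebesgue point of $\log u_*$ and of $v$, and invoking the Lebesgue differentiation theorem, forces $\log u_*(t)\leq \log v(t)$ a.e., whence $u_*\leq v$ a.e., and Remark \ref{remark comp ODE} delivers \eqref{eq: x_star minore x}.

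The main obstacle is exactly this a.e.\ inequality $u_*\leq v$: the diagonal extraction and the transfer of the $L^\infty$ bounds to $u_*$ are routine, but one has to combine two \emph{different} weak limits --- that of $v_{n,n}$ and that of $\log v_{n,n}$ --- through the concavity of $\log$ to produce a pointwise comparison between the two limit functions, which is precisely the missing ingredient that lets the ODE comparison of Remark \ref{remark comp ODE} close the argument.
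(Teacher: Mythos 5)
Your proposal is correct and follows essentially the same route as the paper: standard diagonalization of $(\log v_n)_n$ via Dunford--Pettis using the local $L^\infty$ bounds, definition $u_*=e^f$, transfer of bounds to the weak limit, and then Jensen's inequality applied to averages combined with the two weak convergences $v_{n,n}\rightharpoonup v$ and $\log v_{n,n}\rightharpoonup\log u_*$, followed by a Lebesgue-point (shrinking interval) argument to deduce $u_*\leq v$ a.e.\ and hence the trajectory comparison via Remark \ref{remark comp ODE}. The only cosmetic difference is that you phrase the averaging over general measurable sets $E$ before specializing to intervals, whereas the paper works with intervals $[t_0,t_1]$ from the outset.
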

\begin{proof}
We conduct ``standard'' diagonalization on the sequence $\left(\log v_{n}\right)_{n\in\mathbb{N}}$.
Observe that this sequence, by $\eqref{eq: new1 bound}$, is also
uniformly bounded in the $L_{\left[0,1\right]}^{\infty}$ norm. Precisely,
for any $n\in\mathbb{N}$:
\[
\log\eta\left(x_{0},1\right)\leq\log v_{n}\leq\log N\left(x_{0},1\right)\quad\mbox{a.e. in }\left[0,1\right].
\]
Hence by the Dunford-Pettis criterion there exists a function $f^{1}\in L^{1}\left(\left[0,1\right]\right)$
and a sequence $\left(v_{n,1}\right)_{n}$ extracted form $\left(v_{n}\right)$
such that
\[
\log v_{n,1}\rightharpoonup f^{1}\quad\mbox{in }L^{1}\left(\left[0,1\right]\right).
\]
Again by $\eqref{eq: new1 bound}$, $\left(v_{n,1}\right)_{n}$ satisfies,
for every $n\in\mathbb{N}$:
\[
\log\eta\left(x_{0},2\right)\leq\log v_{n,1}\leq\log N\left(x_{0},2\right)\quad\mbox{a.e. in }\left[0,2\right];
\]
therefore there exist $f^{2}\in L^{1}\left(\left[0,2\right]\right)$
and $\left(v_{n,2}\right)_{n}$ extracted from $\left(v_{n,1}\right)_{n}$
such that
\[
\log v_{n,2}\rightharpoonup f^{2}\quad\mbox{in }L^{1}\left(\left[0,2\right]\right),
\]
and so on. This shows that there exists a function $f\in L_{loc}^{1}\left(\left[0,+\infty\right)\right)$
satisfying, together with the diagonal sequence $\left(v_{n,n}\right)_{n}$,
for every $T>0$:
\begin{align*}
 & \log v_{n,n}\rightharpoonup f\quad\mbox{in }L^{1}\left(\left[0,T\right]\right)\\
 & \log\eta\left(x_{0},T\right)\leq\log v_{n,n}\leq\log N\left(x_{0},T\right)\ \mbox{a.e. in }\left[0,T\right],\forall n\geq T.
\end{align*}

Define $u_{*}:=e^{f}$; then relations $\eqref{eq: new2 conv deb}$
and $\eqref{eq: u_star bound}$ are easy consequences of this definition
and of the properties of the weak convergence.

In order to prove $\eqref{eq: x_star minore x}$, we first observe
that, obviously, $x\left(\cdot;x_{0},u_{*}\right)\geq0$. Fix $0<t_{0}<t_{1}<T$
and let $t_{0}$ be a Lebesgue point for both $\log u_{*}$ and $v$.
By Jensen's inequality we have, for every $n\in\mathbb{N}$: 
\[
\frac{\int_{t_{0}}^{t_{1}}\log v_{n,n}\left(s\right)\mbox{d}s}{t_{1}-t_{0}}\leq\log\left(\frac{\int_{t_{0}}^{t_{1}}v_{n,n}\left(s\right)\mbox{d}s}{t_{1}-t_{0}}\right);
\]
since $\left(v_{n,n}\right)_{n}$ is a subsequence of $\left(v_{n}\right)_{n}$,
passing to the limit for $n\to+\infty$ in the previous relation,
we obtain by $\eqref{eq: new1 conv deb}$ and $\eqref{eq: new2 conv deb}$:
\[
\frac{\int_{t_{0}}^{t_{1}}\log u_{*}\left(s\right)\mbox{d}s}{t_{1}-t_{0}}\leq\log\left(\frac{\int_{t_{0}}^{t_{1}}v\left(s\right)\mbox{d}s}{t_{1}-t_{0}}\right).
\]
Passing now to the limit for $t_{1}\to t_{0}$ yields to $\log u_{*}\left(t_{0}\right)\leq\log v\left(t_{0}\right)$.
By the Lebesgue Point Theorem, $t_{0}$ is a generic element of a
full measure subset of $\left[0,T\right]$. This implies $\eqref{eq: x_star minore x}$,
by Remark $\ref{remark comp ODE}$.
\end{proof}
$ $

A simple integration by parts provides the following decomposition
of the objective functional:
\begin{eqnarray*}
\forall\mathrm{u}\in\Lambda\left(x_{0}\right):\ \mathcal{B}\left(x_{0};\mathrm{u}\right) & = & \int_{0}^{+\infty}e^{-\rho t}\left(\log\mathrm{u}\left(t\right)-cx^{2}\left(t\right)\right)\mbox{d}t\\
 & = & \int_{0}^{+\infty}e^{-\rho t}\log\mathrm{u}\left(t\right)\mbox{d}t-c\int_{0}^{+\infty}e^{-\rho t}x^{2}\left(t\right)\mbox{d}t\\
 & = & \lim_{T\to+\infty}e^{-\rho T}\int_{0}^{T}\log\mathrm{u}\left(s\right)\mbox{d}s+\\
 &  & \rho\int_{0}^{+\infty}e^{-\rho t}\left(\int_{0}^{t}\log\mathrm{u}\left(s\right)\mbox{d}s-\frac{c}{\rho}x^{2}\left(t\right)\right)\mbox{d}t\\
 & =: & \lim_{T\to+\infty}e^{-\rho T}\int_{0}^{T}\log\mathrm{u}\left(t\right)\mbox{d}t+\mathcal{B}_{1}\left(x_{0};\mathrm{u}\right)
\end{eqnarray*}

where
\[
\mathcal{B}_{1}\left(x_{0};\mathrm{u}\right):=\rho\int_{0}^{+\infty}e^{-\rho t}\left(\int_{0}^{t}\log\mathrm{u}\left(s\right)\mbox{d}s-\frac{c}{\rho}x^{2}\left(t;x_{0},\mathrm{u}\right)\right)\mbox{d}t.
\]
With this notation, we prove the final step.

$ $
\begin{corollary}
The control $u_{*}$ defined in Lemma $\ref{lem: new seq 2}$ is optimal
at $x_{0}$, and 
\[
u_{*}\in L_{loc}^{\infty}\left(\left[0,+\infty\right)\right).
\]
\end{corollary}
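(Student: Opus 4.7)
The local boundedness is immediate from \eqref{eq: u_star bound}: since $u_{*}\le N(x_{0},T)$ a.e.\ on $[0,T]$ for every $T>0$, one has $u_{*}\in L^{\infty}_{loc}([0,+\infty))$. For optimality, $u_{*}\in\Lambda(x_{0})$ gives $\mathcal{B}(x_{0};u_{*})\le V(x_{0})$ automatically, so the plan is to prove the reverse inequality using the decomposition $\mathcal{B}=\mathrm{boundary}+\mathcal{B}_{1}$ displayed just above the corollary, by establishing the chain
\[
V(x_{0})\le\liminf_{n}\mathcal{B}_{1}(x_{0};v_{n,n})\le\limsup_{n}\mathcal{B}_{1}(x_{0};v_{n,n})\le\mathcal{B}_{1}(x_{0};u_{*})=\mathcal{B}(x_{0};u_{*}).
\]

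The rightmost equality follows from killing the boundary term at $u_{*}$: the bounds $\log\eta(x_{0},T)\le\log u_{*}\le\log N(x_{0},T)$ on $[0,T]$ furnished by \eqref{eq: u_star bound} give $|\int_{0}^{T}\log u_{*}\,ds|\le T\max(L(x_{0})T,\log N(x_{0},T))$, whose product with $e^{-\rho T}$ vanishes by \eqref{eq: logN a infinito} and elementary growth comparison. For the leftmost inequality I will kill the boundary term at each $v_{n,n}$ from \emph{above}: since $v_{n,n}$ lies in a maximizing sequence, \eqref{eq: stima cruciale massimizz} and Jensen's inequality yield
\[
\int_{0}^{T}\log v_{n,n}(s)\,ds\le T\log\!\left(\tfrac{1}{T}\int_{0}^{T}v_{n,n}\,ds\right)\le T\bigl(\log K_{1}(x_{0})+\rho T-\log T\bigr),
\]
so multiplying by $e^{-\rho T}$ and sending $T\to+\infty$ forces $b_{n}:=\lim_{T}e^{-\rho T}\int_{0}^{T}\log v_{n,n}\,ds\le 0$ (the limit exists because $\int e^{-\rho t}\log v_{n,n}\,dt$ and $\int e^{-\rho t}\int_{0}^{t}\log v_{n,n}\,ds\,dt$ are both finite for controls in a maximizing sequence, via Fubini applied to $|\log v_{n,n}|$). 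Hence $\mathcal{B}_{1}(x_{0};v_{n,n})\ge\mathcal{B}(x_{0};v_{n,n})\to V(x_{0})$.

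The central step, $\limsup_{n}\mathcal{B}_{1}(x_{0};v_{n,n})\le\mathcal{B}_{1}(x_{0};u_{*})$, I would settle by reverse Fatou on
\[
F_{n}(t):=\int_{0}^{t}\log v_{n,n}(s)\,ds-\tfrac{c}{\rho}\,x(t;x_{0},v_{n,n})^{2}.
\]
Pointwise in $t$ three ingredients combine: testing the weak convergence $\log v_{n,n}\rightharpoonup\log u_{*}$ from Lemma \ref{lem: new seq 2} against $\chi_{[0,t]}$ yields $\int_{0}^{t}\log v_{n,n}\to\int_{0}^{t}\log u_{*}$; Proposition \ref{prop: conv punt orbits} provides $x(\cdot;x_{0},v_{n,n})\to x(\cdot;x_{0},v)$; and \eqref{eq: x_star minore x} gives $x(\cdot;x_{0},v)\ge x(\cdot;x_{0},u_{*})$. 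Together these force $\limsup_{n}F_{n}(t)\le\int_{0}^{t}\log u_{*}\,ds-\tfrac{c}{\rho}x(t;x_{0},u_{*})^{2}$. The same Jensen estimate provides the envelope $e^{-\rho t}F_{n}(t)\le e^{-\rho t}\,t\bigl(\log K_{1}(x_{0})+\rho t-\log t\bigr)$, which is integrable on $[0,+\infty)$; reverse Fatou then closes the argument after multiplying by $\rho$ and integrating.

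I expect the main obstacle to be precisely the construction of this $n$-uniform, time-integrable envelope. The naive estimate $\log v_{n,n}\le v_{n,n}$ combined with \eqref{eq: stima cruciale massimizz} only yields the constant bound $e^{-\rho t}\int_{0}^{t}v_{n,n}\,ds\le K_{1}(x_{0})$, which is \emph{not} integrable on $[0,+\infty)$; the Jensen step, which trades an exponential-in-$t$ bound on $\int_{0}^{t}v_{n,n}$ for an essentially quadratic-in-$t$ bound on $\int_{0}^{t}\log v_{n,n}$, is what makes infinite-horizon reverse Fatou applicable, and is the technical heart of the closing argument.
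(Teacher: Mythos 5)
Your proposal is correct and follows essentially the same route as the paper: local boundedness from \eqref{eq: u_star bound}; killing the boundary terms via Jensen's inequality with \eqref{eq: stima cruciale massimizz} (for $v_{n,n}$) and via the explicit bounds $\eta,N$ with \eqref{eq: logN a infinito} (for $u_*$); and a reverse Fatou argument on $F_n$ combining the weak convergence of $\log v_{n,n}$, the pointwise convergence of orbits, and \eqref{eq: x_star minore x}, with the Jensen-derived envelope supplying integrability. The paper uses exactly this envelope (its inequality \eqref{eq: stima int-log-v_n,n} and the integrability check \eqref{eq: bound funzione somm}) together with its Lemma~\ref{lem: reverse fatou}, so your identification of the Jensen step as the technical heart is on the mark.
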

\begin{proof}
Obviously $u_{*}\in L_{loc}^{\infty}\left([0,+\infty)\right)$, by
$\eqref{eq: u_star bound}$. Observe that, by Jensen's inequality
and by Proposition $\ref{prop: succ max}$, for every $n\in\mathbb{N}$
and $t>0$:
\begin{eqnarray}
e^{-\rho t}\int_{0}^{t}\log v_{n,n}\left(s\right)\mbox{d}s & \leq & te^{-\rho t}\log\left(\frac{\int_{0}^{t}v_{n,n}\left(s\right)\mbox{d}s}{t}\right)\nonumber \\
 & \leq & te^{-\rho t}\log\left(K\left(x_{0}\right)e^{\rho t}\right)-te^{-\rho t}\log\left(t\right).\label{eq: stima int-log-v_n,n}
\end{eqnarray}
This implies that $\lim_{t\to+\infty}e^{-\rho t}\int_{0}^{t}\log v_{n,n}\left(s\right)\mbox{d}s\leq0$
and consequently 
\begin{equation}
\mathcal{B}\left(x_{0};v_{n,n}\right)\leq\mathcal{B}_{1}\left(x_{0};v_{n,n}\right).\label{eq: B min B_1 ultimo}
\end{equation}
Moreover
\begin{align}
 & \int_{0}^{+\infty}\Bigl(te^{-\rho t}\log\left(K\left(x_{0}\right)e^{\rho t}\right)-te^{-\rho t}\log\left(t\right)\Bigr)\mbox{d}t\nonumber \\
\leq & \int_{0}^{1}te^{-\rho t}\log\left(K\left(x_{0}\right)e^{\rho t}\right)\mbox{d}t-\int_{0}^{1}te^{-\rho t}\log\left(t\right)\mbox{d}t\nonumber \\
 & +\int_{1}^{+\infty}te^{-\rho t}\log\left(K\left(x_{0}\right)e^{\rho t}\right)\mbox{d}t\ <\ +\infty.\label{eq: bound funzione somm}
\end{align}
 Set $x_{n,n}:=x\left(\cdot;x_{0},v_{n,n}\right)$, $x:=\left(\cdot;x_{0},v\right)$
and $x_{*}:=\left(\cdot;x_{0},u_{*}\right)$. Relations $\eqref{eq: stima int-log-v_n,n}$
and $\eqref{eq: bound funzione somm}$ imply that the hypotheses of
Lemma $\ref{lem: reverse fatou}$ are satisfied for the integral
\[
\int_{0}^{\infty}e^{-\rho t}\left(\int_{0}^{t}\log v_{n,n}\left(s\right)\mbox{d}s-\frac{c}{\rho}x_{n,n}^{2}\left(t\right)\right)\mbox{d}t.
\]

Combining this result with relations $\eqref{eq: B min B_1 ultimo}$,$\eqref{eq: new2 conv deb}$,
$\eqref{eq: x_star minore x}$ and with Proposition $\ref{prop: conv punt orbits}$
we obtain:

\begin{eqnarray*}
V\left(x_{0}\right) & = & \lim_{n\to+\infty}\mathcal{B}\left(x_{0};v_{n,n}\right)\leq\lim_{n\to+\infty}\mathcal{B}_{1}\left(x_{0};v_{n,n}\right)\\
 & = & \rho\lim_{n\to+\infty}\int_{0}^{+\infty}e^{-\rho t}\left(\int_{0}^{t}\log v_{n,n}\left(s\right)\mbox{d}s-\frac{c}{\rho}x_{n,n}^{2}\left(t\right)\right)\mbox{d}t\\
 & \leq & \rho\int_{0}^{+\infty}e^{-\rho t}\limsup_{n\to+\infty}\left(\int_{0}^{t}\log v_{n,n}\left(s\right)\mbox{d}s-\frac{c}{\rho}x_{n,n}^{2}\left(t\right)\right)\mbox{d}t\\
 & = & \rho\int_{0}^{+\infty}e^{-\rho t}\left(\int_{0}^{t}\log u_{*}\left(s\right)\mbox{d}s-\frac{c}{\rho}x^{2}\left(t\right)\right)\mbox{d}t\\
 & \leq & \rho\int_{0}^{+\infty}e^{-\rho t}\left(\int_{0}^{t}\log u_{*}\left(s\right)\mbox{d}s-\frac{c}{\rho}x_{*}^{2}\left(t\right)\right)\mbox{d}t\\
 & = & \mathcal{B}_{1}\left(x_{0};u_{*}\right).
\end{eqnarray*}

Finally observe that by $\eqref{eq: u_star bound}$, for every $t\geq0$:
\[
te^{-\rho t}\log\eta\left(x_{0},t+1\right)\leq e^{-\rho t}\int_{0}^{t}\log u_{*}\left(s\right)\mbox{d}s\leq te^{-\rho t}\log N\left(x_{0},t+1\right),
\]
which implies that the estimated quantity vanishes for $t\to+\infty$,
since $\eta\left(x_{0},t\right)=e^{-L\left(x_{0}\right)t}$ and by
$\eqref{eq: logN a infinito}$.

Hence $\mathcal{B}_{1}\left(x_{0};u_{*}\right)=\mathcal{B}\left(x_{0};u_{*}\right)$,
and this concludes the proof.
\end{proof}

\section*{Appendix}
\addcontentsline{toc}{section}{Appendix}

\begin{lemma}
\label{lem: reverse fatou}Let $\left(E,\sigma,\mu\right)$ a measure
space, $f_{n}$ $\left(n\in\mathbb{N}\right)$ and $g$ $\mu$-measurable
functions in $E$, $F\subseteq E$ a full measure set such that:
\begin{align*}
 & \forall n\in\mathbb{N}:f_{n}\leq g\quad\mbox{in }F\\
 & \int_{E}g\mbox{d}\mu<+\infty.
\end{align*}
Then
\[
\limsup_{n\to+\infty}\int_{E}f_{n}\mbox{d}\mu\leq\int_{E}\limsup_{n\to+\infty}f_{n}\mbox{d}\mu.
\]
\end{lemma}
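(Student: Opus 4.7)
The plan is to reduce this to the classical Fatou lemma applied to the nonnegative sequence $h_n := g - f_n$. Since $f_n \leq g$ on the full-measure set $F$ and modifying the integrands on the null set $E\setminus F$ does not affect any of the integrals in the statement, I would first redefine the $f_n$'s and $g$ on $E\setminus F$ so that $f_n \leq g$ holds everywhere. Moreover, $\int_E g\,\mathrm{d}\mu<+\infty$ forces $g<+\infty$ $\mu$-almost everywhere, so a further adjustment on a null set lets me assume $g$ is everywhere finite; this is what makes all of the algebraic manipulations with $\liminf$/$\limsup$ below legitimate.

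With these reductions, $h_n = g - f_n \geq 0$ pointwise, so the standard Fatou lemma yields
\[
\int_E \liminf_{n\to+\infty} h_n \,\mathrm{d}\mu \;\leq\; \liminf_{n\to+\infty} \int_E h_n \,\mathrm{d}\mu.
\]
Since $g$ is finite, the elementary identity $\liminf_n (g - f_n) = g - \limsup_n f_n$ rewrites the left-hand side as $\int_E g\,\mathrm{d}\mu - \int_E \limsup_n f_n\,\mathrm{d}\mu$. For the right-hand side, the finiteness of $\int_E g\,\mathrm{d}\mu$ together with $\int_E f_n\,\mathrm{d}\mu \leq \int_E g\,\mathrm{d}\mu<+\infty$ ensures that $\int_E h_n\,\mathrm{d}\mu = \int_E g\,\mathrm{d}\mu - \int_E f_n\,\mathrm{d}\mu$ (where the right-hand side may take the value $+\infty$ if $\int_E f_n\,\mathrm{d}\mu=-\infty$, consistently with the left-hand side). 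Using again that $\int_E g\,\mathrm{d}\mu$ is a finite constant, $\liminf_n \int_E h_n\,\mathrm{d}\mu = \int_E g\,\mathrm{d}\mu - \limsup_n \int_E f_n\,\mathrm{d}\mu$.

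Combining these two rewritings and canceling the finite quantity $\int_E g\,\mathrm{d}\mu$ from both sides produces exactly
\[
\limsup_{n\to+\infty}\int_E f_n\,\mathrm{d}\mu \;\leq\; \int_E \limsup_{n\to+\infty} f_n\,\mathrm{d}\mu,
\]
which is the conclusion. Both integrals lie in $[-\infty,+\infty)$ because $\limsup_n f_n \leq g$ forces $\int_E \limsup_n f_n\,\mathrm{d}\mu \leq \int_E g\,\mathrm{d}\mu<+\infty$, so the inequality is meaningful even when one side is $-\infty$.

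There is no genuine obstacle here: this is the standard ``reverse Fatou'' lemma and the only delicate point is bookkeeping. Specifically, one must be careful that every subtraction of integrals (and every identity $\liminf(c-a_n)=c-\limsup a_n$) is carried out with a \emph{finite} term on one side, which is exactly what the hypothesis $\int_E g\,\mathrm{d}\mu<+\infty$ and the preliminary almost-everywhere reduction to $g<+\infty$ guarantee.
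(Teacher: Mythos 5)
Your reduction to the classical Fatou lemma via $h_n := g - f_n$ is in substance the same argument as the paper's: the paper simply unrolls Fatou by applying monotone convergence to $a_n := g - \sup_{k\ge n} f_k$, which is exactly the partial-infimum sequence one uses to prove Fatou for $h_n$. So the key idea coincides.

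There is, however, one genuine oversight. You assert that the hypothesis $\int_E g\,\mathrm{d}\mu < +\infty$ together with the pointwise reduction $g<+\infty$ a.e.\ "guarantee" that $\int_E g\,\mathrm{d}\mu$ is a \emph{finite} constant, and every subtraction in your argument leans on this. That is not what the hypothesis says: it explicitly allows $\int_E g\,\mathrm{d}\mu = -\infty$. In that case the bookkeeping identity $\int_E h_n\,\mathrm{d}\mu = \int_E g\,\mathrm{d}\mu - \int_E f_n\,\mathrm{d}\mu$ degenerates to $(-\infty)-(-\infty)$, and the cancellation at the end is likewise illegal. The paper disposes of this at the outset (Case I): if $\int_E g\,\mathrm{d}\mu = -\infty$, then $\int_E f_n\,\mathrm{d}\mu \le \int_E g\,\mathrm{d}\mu = -\infty$ for every $n$, so $\limsup_n \int_E f_n\,\mathrm{d}\mu = -\infty$ and the conclusion is trivial (the right-hand side is still well defined in $[-\infty,+\infty)$ because $\limsup_n f_n \le g$ and $\int_E g^+\,\mathrm{d}\mu<+\infty$). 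You need to insert this one-line case split before invoking Fatou; with that added, your proof is correct and essentially identical to the paper's.
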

\begin{proof}
\textsc{Case I}. $\int_{E}g\mbox{d}\mu=-\infty$. Then 
\[
\limsup_{n\to+\infty}\int_{E}f_{n}\mbox{d}\mu=-\infty
\]
and the thesis is trivially true.

\textsc{Case I}I. $\int_{E}g\mbox{d}\mu\in-\left(\infty,+\infty\right)$

The sequence
\[
a_{n}:=g-\sup_{k\geq n}f_{k}
\]
satisfies
\[
0\leq a_{n}\uparrow g-\limsup_{m\to+\infty}f_{m}\quad\mbox{in }F.
\]
Hence by Monotone convergence:
\begin{equation}
\int_{E}\left(g-\sup_{k\geq n}f_{k}\right)\mbox{d}\mu=\int_{E}a_{n}\mbox{d}\mu\uparrow\int_{E}\left(g-\limsup_{m\to+\infty}f_{m}\right)\mbox{d}\mu.\label{eq: 1}
\end{equation}

Observe that the quantities
\begin{align*}
 & \int_{E}\left(-\sup_{k\geq n}f_{k}\right)\mbox{d}\mu:=\int_{E}\left(g-\sup_{k\geq n}f_{k}\right)\mbox{d}\mu-\int_{E}g\mbox{d}\mu\\
 & \int_{E}\left(-\limsup_{m\to+\infty}f_{m}\right)\mbox{d}\mu:=\int_{E}\left(g-\limsup_{m\to+\infty}f_{m}\right)\mbox{d}\mu-\int_{E}g\mbox{d}\mu
\end{align*}
make sense and belong to $(-\infty,+\infty]$. It follows from $\eqref{eq: 1}$
that:
\begin{equation}
\lim_{n\to+\infty}\int_{E}\left(-\sup_{k\geq n}f_{k}\right)\mbox{d}\mu=\int_{E}\left(-\limsup_{m\to+\infty}f_{m}\right)\mbox{d}\mu.\label{eq: 2}
\end{equation}
Indeed, if $\int_{E}\left(-\sup_{k\geq n_{0}}f_{k}\right)\mbox{d}\mu=+\infty$
for some $n_{0}\in\mathbb{N}$, then both

$\lim_{n\to+\infty}\int_{E}\left(-\sup_{k\geq n}f_{k}\right)\mbox{d}\mu$
and $\int_{E}\left(-\limsup_{m\to+\infty}f_{m}\right)\mbox{d}\mu$
are $+\infty$.

If $\int_{E}\left(-\sup_{k\geq n}f_{k}\right)\mbox{d}\mu<+\infty$
for every $n\in\mathbb{N}$ and

$\int_{E}\left(-\limsup_{m\to+\infty}f_{m}\right)\mbox{d}\mu<+\infty$,
then clearly $\eqref{eq: 2}$ follows from $\eqref{eq: 1}$, whilst
in case

$\int_{E}\left(-\limsup_{m\to+\infty}f_{m}\right)\mbox{d}\mu=+\infty$
we have
\begin{eqnarray*}
+\infty & = & \int_{E}\left(g-\limsup_{m\to+\infty}f_{m}\right)\mbox{d}\mu=\lim_{n\to+\infty}\int_{E}\left(g-\sup_{k\geq n}f_{k}\right)\mbox{d}\mu\\
 & = & \int_{E}g\mbox{d}\mu+\lim_{n\to+\infty}\int_{E}\left(-\sup_{k\geq n}f_{k}\right)\mbox{d}\mu
\end{eqnarray*}
which implies
\[
\lim_{n\to+\infty}\int_{E}\left(-\sup_{k\geq n}f_{k}\right)\mbox{d}\mu=+\infty.
\]

It follows from $\eqref{eq: 2}$ that
\begin{eqnarray*}
\inf_{n\in\mathbb{N}}\int_{E}\sup_{k\geq n}f_{k}\mbox{d}\mu & = & \int_{E}\limsup_{m\to+\infty}f_{m}\mbox{d}\mu.
\end{eqnarray*}
Moreover, it is a consequence of the definition of sup that
\[
\limsup_{m\to+\infty}\int_{E}f_{m}\mbox{d}\mu\leq\inf_{n\in\mathbb{N}}\int_{E}\sup_{k\geq n}f_{k}\mbox{d}\mu.
\]
\end{proof}


\begin{thebibliography}{10}
\bibitem{AB} Acquistapace, P., \& Bartaloni, F. (2016). Optimal Control
with State Constraint and Non-concave Dynamics: A Model Arising in
Economic Growth. \emph{Applied Mathematics and Optimization, 10.1007/s00245-016-9353-5}.

\bibitem{Bocur Buttazzo} Bocur, D., \& Buttazzo, G. (2004). \emph{Variational
methods in shape optimization problems}. Boston: Birkh\"auser.

\bibitem{Buttazzo one dim} Buttazzo, G., Giaquinta, M., \& Hildebrandt, S. (1998). \emph{One-dimensional variational problems: an introduction}. Oxford: Clarendon Press.

\bibitem{Carpenter} Carpenter, S. R., \& Cottingham, K. L. (1997). Resilience
and restoration of lakes. \emph{Conservation Ecology}, {[}online{]} 1(1), 2.

\bibitem{Cesari} Cesari, L. (1966). Existence theorems for weak
and usual optimal solutions in Lagrange problems with unilateral constraints.
\emph{Trans. Amer. Math. Soc., 124}, 369-412.

\bibitem{Edwards} Edwards, R. E. (1995). \emph{Functional Analysis}. New York: Holt,
Rineheart and Winston.

\bibitem{Filippov} Filippov, A. F. (1959, 1962). On certain questions in the
theory of optimal control. \emph{Vestnik Moskov. Univ. Ser. Math. Mech.
Astronom., 2 } (1959), 25-32 = \emph{SIAM J. Control, 1} (1962), 76-84.

\bibitem{Fleming Rishel} Fleming, W. H., \& Rishel, R. W. (1975). \emph{Deterministic
and stochastic optimal control}. New York: Springer-Verlag.

\bibitem{Freni Gozzi Pignotti} Freni, G., Gozzi, F., \& Pignotti, C. (2008).
Optimal strategies in linear multisector models: Value function
and optimality conditions. \emph{Journal of Mathematical Economics, Elsevier, 44(1)}, 55-86.

\bibitem{Kiseleva-Wagener 1} Kiseleva, T., \& Wagener,  F.O.O. (2010). Bifurcations
of optimal vector fields in the shallow lake model. \emph{Journal of Economic
Dynamics \& Control, 34}, 825-843.

\bibitem{Kiseleva-Wagener 2} Kiseleva, T., \& Wagener,  F.O.O. (2015). Bifurcations
of Optimal Vector Fields. \emph{Mathematics of Operations Research, 40(1)}, 24-55.

\bibitem{Kossioris} Kossioris, G., \& Zohios, C. (2012). The value
function of the shallow lake problem as a viscosity solution of a
hjb equation. \emph{Quarterly of Applied Mathematics, 70}, 625-657.

\bibitem{Maler} M\"aler, K. G., Xepapadeas, A., \& de Zeeuw, A. (2003). The economics of Shallow
Lakes. \emph{Environmental and Resource Economics, 26}, 603-624.

\bibitem{Scheffer} Scheffer, M. (2004). \emph{Ecology of Shallow Lakes}.
Dordrecht: Kluver Academic Publishers.

\bibitem{Wagener 1} Wagener, F.O.O. (2003). Skiba points and
heteroclinic bifurcations, with applications to the shallow lake system.
\emph{Journal of Economic Dynamics \& Control, 27}, 1533-1561.
\end{thebibliography}
\end{document}